\newtheorem{remark}{Remark}
\newtheorem{lemma}{Lemma}[section]
\newtheorem{theorem}[lemma]{Theorem}
\newtheorem{proposition}[lemma]{Proposition}
\theoremstyle{definition}
\newtheorem{example}{Example}[section]
\DeclareMathOperator{\ini}{in}
\DeclareMathOperator{\Hs}{Hs}
\DeclareMathOperator{\Hf}{Hf}
\DeclareMathOperator{\post}{post}
\DeclareMathOperator{\nz}{nz}
\DeclareMathOperator{\V}{V}
\DeclareMathOperator{\nf}{Nf}
\DeclareMathOperator{\NZ}{NZ}
\DeclareMathOperator{\diag}{diag}
\DeclareMathOperator{\rank}{rank}
\DeclareMathOperator{\initials}{Initials}
\title{Multiplication matrices and ideals of projective dimension zero}
\date{}
\author{Samuel Lundqvist}
\begin{document}

\maketitle

\begin{abstract}
We introduce the concept of multiplication matrices for ideals of projective dimension zero. We discuss various applications and in particular, we give a new algorithm to compute the variety of an ideal of projective dimension zero. 
\end{abstract}

\section{Introduction}
Eigenvalue methods to compute the variety of an affine zero-dimensional ideal has become an active area of research \cite{corless, Corlessetal, Mollerstetter, Mollertenberg}. Recall that an ideal is defined to be of dimension zero when the corresponding variety is finite.
The eigenvalue methods use both algebraic and numerical algorithms and the key is a nice one-to-one correspondence between the points on the variety and the eigenvectors to so called multiplication matrices.

The notion of zero-dimensionality has a meaning also in projective space. Over an infinite field, an ideal of projective dimension zero is an ideal whose variety consists of a finite number of projective points. Below we will give a general definition, valid also for finite fields.
We show that it is possible to define multiplication matrices with respect to ideals of projective dimension zero. Our main result is that the one-to-one correspondence mentioned above also holds in the projective setting, giving a new method to compute the variety of an ideal of projective dimension zero. 

In order to define the projective multiplication matrices, we need to choose appropriate vector space bases for the graded pieces of the quotient ring $S/I$, where $S$ is a polynomial ring over some field and $I$ is an ideal of projective dimension zero. 

The usual choice of a vector space basis is the set of residues to the complement of the initial ideal of I (with respect to some monomial order). Our choice of bases differs from the usual ones --- in general we consider non-monomial $\Bbbk$-bases.
This choice of bases happen to give a fast normal form algorithm for high degree elements, which outperforms the usual reduction method based on Gr\"obner bases. 

Moreover, we discuss how our approach can be used to compute vanishing ideals of projective points. We give an alternative version of the graded Buchberger-M\"oller algorithm and give a fast algorithm for computing projective separators. 



\section{Notation and preliminaries}
Throughout the paper, let $\Bbbk$ be a field and let $S = \Bbbk[x_0, \ldots, x_n]$ denote the polynomial ring in $n+1$ variables. Recall that the Hilbert series of a graded ring $R = R_0 \oplus R_1 \oplus R_2 \oplus \cdots$ is the power series $\Hs(R,t) = \dim_{\Bbbk}(R_0) + \dim_{\Bbbk}(R_{1})t + \dim_{\Bbbk}(R_{2})t^2 + \cdots.$ An ideal $I$ is of projective dimension zero exactly when $R=S/I$ is graded and satisfies $\dim_{\Bbbk} (R_i) = m$ for some $m>0$ and for all $i$ sufficiently large.
 The least $i$ such that
$\dim_{\Bbbk} (R_{i}) = \dim_{\Bbbk} (R_{i+1}) = \cdots$ is called the \emph{postulation number} and is denoted by $\post(R)$. We say that $R$ postulates in degree $\post(R)$.
 
When $I$ is an ideal of projective dimension zero and $R=S/I$, we say that $R$ is a ring of projective dimension zero. For an element $a$ of $S$, we write $[a]$ to denote the equivalence class in $R$ containing $a$.
 
 By $\overline{V(I)}$, we denote the
variety of $I$ with respect to the algebraic closure $\overline{\Bbbk}$ of $\Bbbk$, so $|V(I)| \leq |\overline{V(I)}|$. 
The number of projective points in $\overline{V(I)}$ counting multiplicity equals $\dim_{\Bbbk}(R_{\post(R)})$. 
%

Let $I = Q_1 \cap \cdots \cap Q_{s}$ be a minimal primary decomposition of $I$. Let $P_i = \sqrt{Q_i}$. When no $P_i$ equals the unique graded maximal
 ideal $\mathfrak{m} = (x_0, \ldots, x_n)$ of $S$, we say that $I$ is unmixed. When $I$ is unmixed, the Hilbert series of $R$ is strictly increasing until
it reaches degree $\post(R)$. 

When $I$ is mixed, we will write the primary decomposition as 
$I = Q_1 \cap \cdots \cap Q_{s} \cap Q_{s+1}$ and assume that $\mathfrak{m} = \sqrt{Q_{s+1}}$. The Hilbert series of $R$ when $I$ is mixed does not behave nice in general (it can have an arbitrary number of valleys for instance). 

We need a notation to drop the primary component $Q_{s+1}$ in the case when $I$ is mixed, so we define $I^u = Q_1 \cap \cdots \cap Q_s$. This means that when $I$ is unmixed, we have the identity $I = I^u$. We extend this definition so that $R^u = S/I^u$. 

\begin{example} \label{ex:not}
Let $I = x_1 x_2^3 - x_2^4, x_1^3 x_2^2 - x_2^5$. Then 
$I = (x_1-x_2) \cap (x_2^2) \cap (x_1^3,x_2^3)$
Thus, $Q_1 = (x_1-x_2)$, $Q_2 = (x_2^2)$ and $Q_3 = (x_1^3,x_2^3)$. We have
$P_1 = \sqrt{Q_1} = Q_1$, so $V(P_1) = \{(1:1)\}$ and 
$P_2 = \sqrt{Q_2} = (x_2)$ so $V(P_2) = \{(1:0)\}$. Finally 
$\mathfrak{m} = \sqrt{Q_3}$. Thus, $I^u = (x_1-x_2) \cap (x_2^2) = (x_1x_2^2-x_2^3).$
We have $\Hs(R^u,t) = 1+2t+3t^2+3t^3+\cdots$, while
$  \Hs(R,t) = 1+2t+3t^2+4t^3+4t^4+3t^5+3t^6 +\cdots$, so $\post(R^u) = 2$ and 
$\post(R) = 5$ (It does not in general hold though that $\post(R^u) \leq \post(R)$.) The point $(1:1)$ has multiplicity one, while the point $(1:0)$ has multiplicity two (it follows from the fact that $(x_2^2)$ has a chain of primary ideals of length two).
\end{example}
We have a one-to-one correspondence of prime ideals $P_i$ generated in degree one and points on $V(I)$. 
If $\overline{V(I)} = V(I)$, it follows that $|V(I)| = s$ and that $P_1, \ldots, P_s$ are all generated in degree one. 

The concept of non-zero divisors is of particular importance in this paper. Recall that $l$ is a non-zero divisor on the $S$-module $M$ if
$lm=0$ implies that $l \neq 0$. In Example \ref{ex:not}, $[x_1]$ is a non-zero divisor on $R^u$, while $[x_2]$ is not.
The existence of non-zero divisors is connected to Cohen-Macaulayness and also to the primary decomposition in the following sense for  
a ring $R = S/I$ of projective dimension zero.
$$ I \text{ is unmixed} \Leftrightarrow R \text{ is Cohen-Macaulay} \Leftrightarrow R \text{ contains  a non-zero divisor} $$

Although $R$ lacks non-zero divisors when $I$ is mixed, 
we will show that there exists a minimal degree $d$ so that $R^u_d \oplus R^u_{d+1} \oplus \cdots$ and  $R_{d} \oplus R_{d+1} \oplus \cdots$ are isomorphic as $S$-modules. 
This degree $d$ will equal $\max(\post(R), \post(R^u))$. To simplify notation, we will denote $\max(\post(R), \post(R^u))$ by $\nz(R)$. When the context is clear, we will omit $R$ and only write $\nz$. It follows that every non-zero divisor on $R^u$ is 
also a non-zero divisor on $R_{\nz} \oplus R_{\nz+1} \oplus \cdots$. In Example \ref{ex:not}, $\nz(R) = 5$ and $[x_1]$ is a non-zero divisor on $R_5 \oplus R_6 \oplus \cdots$.

Suppose that $V$ and $W$ are two $\Bbbk$-spaces of dimension $m$ and $m'$ respectively. Let $e_1, \ldots, e_m$ and $f_1, \ldots, f_{m'}$ be $\Bbbk$-bases of $V$ and $W$, respectively. Let $\phi$ be a $\Bbbk$-linear map from $V$ to $W$ and let $A_{\phi}$ be the $m \times m'$ matrix, whose $i$'th row is the coefficient vector $(c_1, \ldots, c_{m'})$ and where $\phi(e_i) = c_1 f_1 + \cdots + c_{m'} f_{m'}.$ Notice that $A_{\phi}$ is the transpose of the standard matrix representation of $\phi$. 

When the map $\phi$ is defined on an a finite dimensional algebra by $v \mapsto fv$ for an element $f$ in the algebra, then the matrix $A_{\phi}$ is called the \emph{multiplication matrix} with respect to $f$.  We also denote $A_{\phi}$ by $A_{f}$

\begin{example}
Let $I = (x_1-1,x_2^2-x_2) \subset \mathbf{Q}[x_1,x_2]$. The ring $\mathbf{Q}[x_1,x_2]/I$ is of affine dimension zero and a vector space basis for the quotient ring can be
chosen as $[1],[x_2]$. In this algebra we have $[x_1] [1] = [1], [x_1] [x_2] = [x_2], [x_2][1] = [x_2]$ and $[x_2] [x_2] = [x_2]$, so the multiplication matrices with respect to  $[x_1]$ and $[x_2]$ equals
$$
A_{x_1} = 
\begin{pmatrix}
1 & 0 \\
0 & 1
\end{pmatrix} \text{ and }
A_{x_2} = 
\begin{pmatrix}
0 & 1 \\
0 & 1
\end{pmatrix}.
$$
It is customary to write $A_{x_i}$ instead of $A_{[x_i]}$ and we will do so in the sequel. 
\end{example}
When it comes to rings of projective dimension zero, we have a problem since these rings are infinite dimensional. In order to overcome this problem, 
we use the graded property among these rings. So let $\phi$ be defined from $R_d$ to $R_{d+|f|}$ 
by multiplication by a form $f$ with respect to the bases $e_1, \ldots, e_m$ and $f_1, \ldots, f_{m'}$ for $R_d$ and $R_{d+|f|}$ respectively. Then $A_{f}$ (or $A_{\phi}$) is
the \emph{projective multiplication matrix} in degree $d$ with respect to $f$. We will later show that it is possible to choose bases such that the projective multiplication matrices agrees for all degrees
greater than or equal to $\nz(R)$.
 
\begin{example}
 For the ring $R^u$ from Example \ref{ex:not}, we can choose $[x_1^2] ,[x_1x_2],[x_2^2]$ as a $\Bbbk$-basis in degree two and 
$[x_1^3], [x_1^2x_2], [x_2^3]$ as a $\Bbbk$-basis in degree three. If $\phi$ denotes the map from
$R^u_2$ to $R^u_3$ induced by multiplication by $[x_2]$, then the projective multiplication matrix of degree two with respect to $[x_2]$ equals

$$
\begin{pmatrix}
0 $ 1 $ 0\\
0 $ 0 $ 1\\
0 $ 0 $ 1
\end{pmatrix}
$$
since 
$[x_2]  [x_1^2] = 0 [x_1^3] + 1 [x_1^2x_2] + 0 [x_2^3]$,  
$[x_2] [x_1x_2] = 0 [x_1^3] + 0 [x_1^2x_2] + 1 [x_2^3]$
and
$[x_2][x_2^2] = 0 [x_1^3] + 0 [x_1^2x_2] + 1 [x_2^3]$.

\end{example}

%

\section{Projective multiplication matrices}

We will use the fact that rings of projective dimension zero postulates in order to define the projective multiplication matrices.

\subsection{Non-zero divisors of degree one}
If $p_i = (p_{i0}: \cdots : p_{in})$ is a projective point with respect to the coordinates $x_0, \ldots, x_n$, then we cannot define evaluation on a form in a unique way since 
$\lambda p_i = p_i$ for non-zero $\lambda$.
For practical purposes it is however good to define  the evaluation in a unique way and we will do this by simply fixing a representation of each projective point. So we associate to each projective point $p_i = (p_{i0}: \cdots : p_{in})$ an affine point $p_i^a = (p_{i0}, \ldots, p_{in})$ and now we can define the unique evaluation as $x_0^{\alpha_0} \cdots x_n^{\alpha_n}(p_i) = x_0^{\alpha_0} \cdots x_n^{\alpha_n}(p_i^a) = p_{i0}^{\alpha_0} \cdots p_{in}^{\alpha_n}$. This way of thinking of evaluation is implicit in \cite{robbiano}.
 
With this evaluation method it follows that if $l_1$ and $l_2$ are elements of $S$ and $[l_1] = [l_2]$ in $R$, then 
$l_1(p) = l_2(p)$ for all points $p$ on $V(I)$. This property makes it possible to define evaluation on elements in $R$ by $[l](p) = l(p)$, where $p \in V(I)$. 

We will need a notation for evaluating a set of points on a set of elements. If $P= \{p_1, \ldots,p_m\}$ is a set of projective points, we write $f(P) = (f(p_1), \ldots, f(p_m))$. If $F =
\{f_1, \ldots, f_s\}$ is a set of forms in $S$, then $F(P)$ is defined to be the $(s \times m)$-matrix whose $i$'th row is $f_i(P)$.

\begin{lemma} \label{lemma:nonzeroeval}
 Let $I$ be an ideal of projective dimension zero. Suppose that $\V(I) = \overline{\V(I)}$.  
That $[l]$ is a non-zero divisor on $R^u$ is equivalent to $l(p)\neq 0$ for all $p \in V(I)$.
\end{lemma}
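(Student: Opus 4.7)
The plan is to reduce the statement to the standard characterization of zero divisors via associated primes. Concretely, since $I^u = Q_1 \cap \cdots \cap Q_s$ is a minimal primary decomposition, the associated primes of $R^u = S/I^u$ are exactly $P_1,\ldots,P_s$, and a classical result from commutative algebra identifies the set of zero divisors on $R^u$ with the union $\bigcup_{i=1}^{s} P_i$. Thus $[l]$ is a non-zero divisor on $R^u$ iff $l \notin P_i$ for every $i$.

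Next, I would translate each condition $l \notin P_i$ into a geometric condition using the hypothesis $V(I) = \overline{V(I)}$. The paper has already recorded that, under this hypothesis, each $P_i$ (for $i \leq s$) is generated in degree one and is in bijection with a projective point $p_i \in V(I)$. Since $P_i$ is a homogeneous prime generated by linear forms that cut out the single projective point $p_i$, it is precisely the vanishing ideal of $p_i$. Therefore, with the evaluation convention fixed just above the lemma, $l \in P_i$ iff $l(p_i) = 0$. Chaining the two equivalences gives both directions: $[l]$ is a non-zero divisor on $R^u$ iff $l \notin P_i$ for all $i$ iff $l(p_i) \neq 0$ for all $i$ iff $l(p) \neq 0$ for every $p \in V(I)$.

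The proof is essentially an unwinding of definitions, so I do not expect a serious obstacle. The two points that deserve explicit verification are (i) that the paper's notion of non-zero divisor (an element $[l] \in R^u$ with $[l][a] = 0 \Rightarrow [a] = 0$) coincides with the module-theoretic one used in the associated-primes lemma, and (ii) that the hypothesis $V(I) = \overline{V(I)}$ is genuinely what allows one to identify $P_i$ with the vanishing ideal of $p_i$ (rather than only guaranteeing that $P_i$ contains it). Both are standard and are already implicit in the discussion preceding the lemma, so recording them carefully suffices.
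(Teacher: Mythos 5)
Your proof is correct, but takes a genuinely different route from the paper's, which proves both directions by hand. To show that a non-zero divisor must be nonvanishing, the paper takes any $q \in V(I)$, invokes the existence of a separator $Q$ (so $Q(q)\neq 0$ and $Q(p)=0$ for $p\neq q$), and notes that if $l(q)=0$ then $[Q][l]=0$ with $[Q]\neq 0$. For the converse, it argues that $[a][l]=0$ forces $a$ to vanish on $V(I)$ and concludes $[a]=0$. Your proof instead uses the associated-primes fact that the zero divisors of $S$ on $R^u=S/(Q_1\cap\cdots\cap Q_s)$ are exactly $\bigcup_{i=1}^s P_i$, together with the identification (recorded just above the lemma) of each $P_i$ with the vanishing ideal of a rational point. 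The two points you flag for explicit verification are the right ones and both are immediate. Your route is also a bit more robust in the converse direction: the paper passes from $a$ vanishing on $V(I)$ to $a\in I$, which in the presence of multiplicities only yields $a\in\sqrt{I^u}$; the associated-primes formulation handles multiplicities automatically and also avoids appealing to separators.
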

\begin{proof}
Suppose that $l(p) \neq 0$ for all $p \in V(I)$. If $[a] \in R^u$ is such that  
$[a] \cdot [l] = 0$ in $R^u$, then 
$a \cdot l \in I^u$, so that $(a \cdot l) (p) = a(p) \cdot l(p) = 0$ for all $p \in V(I)$. Hence $a(p) = 0$ for all $p \in V(I)$.
Thus $a \in I$, so $[l]$ is a non-zero divisor on $R^u$.

Suppose instead that $[l]$ is a non-zero divisor on $R^u$. Let $q$ be an arbitrary point in $V(I)$. 
Let $[Q]$ be an element in $R^u$ such that $Q(q) \neq 0$ and $Q(p) = 0$ for $p \in V(I) \setminus \{q\}$. 
(The element $Q$ is called a separator for the point $q$ with respect to $V(I)$. Separators exists, see for instance \cite{Abbottetal} or Section \ref{sec:computingseparators}.)
Suppose that $l(q) = 0$. Then $Q(p) \cdot l(p) = 0$ for all $p \in V(I)$, so that $[Q] \cdot [l] = 0$. Since $[Q] \neq 0$, it follows that $[l]$ is a zero-divisor, which is a contradiction. Hence $l(p) \neq 0$ for all $p \in V(I)$. 
 \end{proof}
 
\begin{proposition}  \label{prop:nonzerodivisordegone}
Suppose that $\Bbbk$ contains at least $|\overline{V(I)}|$ elements. 
Then $R$ has a linear non-zero divisor if and only if $I$ is unmixed.
The requirement on $\Bbbk$ is sharp in the sense that if $\Bbbk$ contains $|\overline{V(I)}|-1$ elements, 
then there exists an unmixed ideal $I$ such that $R$ lacks linear non-zero divisors.
\end{proposition}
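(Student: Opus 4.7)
The only-if direction is immediate from the chain of equivalences recalled just before Lemma \ref{lemma:nonzeroeval}: a linear non-zero divisor is, in particular, a non-zero divisor, forcing $I$ to be unmixed.

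For the converse, assume $I$ is unmixed and write $N=|\overline{V(I)}|$. The associated primes of $I$ are then exactly its minimal primes; they correspond to the Galois orbits in $\overline{V(I)}$, and a linear form $l$ lies in such an associated prime iff $l(p)=0$ for some $p$ in the corresponding orbit. So the plan is to produce $l\in S_1$ with $l(p)\neq 0$ for every $p\in\overline{V(I)}$, which by prime avoidance will make $l$ a non-zero divisor on $R=R^u$. For each $p\in\overline{V(I)}$, the coefficient vectors of linear forms vanishing at $p$ form a proper $\Bbbk$-subspace $W_p\subset S_1\cong\Bbbk^{n+1}$ (it is cut out by the single $\overline{\Bbbk}$-linear condition $\sum_j c_j p_j = 0$), so $|W_p|\leq q^n$ where $q=|\Bbbk|$. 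Since $0\in W_p$ for every $p$,
$$\Bigl|\bigcup_{p\in\overline{V(I)}} W_p\Bigr| \;\leq\; 1 + N(q^n-1).$$
This is strictly less than $|S_1|=q^{n+1}$ exactly when $N<\frac{q^{n+1}-1}{q^n-1}=q+\frac{q-1}{q^n-1}$, and the correction term $\frac{q-1}{q^n-1}$ is strictly positive for $n\geq 1$, so the inequality holds whenever $q\geq N$. Thus $S_1\setminus\bigcup_p W_p$ is non-empty and a suitable $l$ exists.

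For sharpness, take $\Bbbk=\mathbb{F}_q$ and let $I\subset\mathbb{F}_q[x_0,x_1]$ be the vanishing ideal of the $q+1$ points of $\mathbb{P}^1(\mathbb{F}_q)$. Then $I$ is radical, hence unmixed, with $|\overline{V(I)}|=q+1=|\Bbbk|+1$. Every non-zero linear form in $\mathbb{F}_q[x_0,x_1]$ has a unique projective root in $\mathbb{P}^1(\overline{\mathbb{F}_q})$, which necessarily lies in $\mathbb{P}^1(\mathbb{F}_q)$, so no linear form is a non-zero divisor on $R$.

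The main obstacle I anticipate is the arithmetic of the counting step: one has to check carefully that the threshold produced by the crude union bound is exactly $|\Bbbk|\geq N$, and that this same threshold is matched (rather than beaten) by the sharpness example. The reduction to a covering question in $S_1$ is the only genuinely conceptual step, and the rest is bookkeeping.
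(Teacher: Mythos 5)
Your only-if direction and sharpness example coincide with the paper's (the sharpness configuration you give is exactly the paper's, specialized to $n=1$: the paper's points all lie on a line in $\mathbb{P}^n$, so the two constructions are the same). For the existence direction you take a genuinely different route. The paper argues constructively by prime avoidance in $\overline{S}_1$: it builds $v_1, v_2, \ldots$ inductively, at step $i$ either keeping $v_i$ if it already avoids $\eta_{i+1}=\overline{P_{i+1}}\cap S_1$, or replacing it by $v_i+\alpha w_i$ for a suitable $w_i\in\eta_i\setminus\eta_{i+1}$ and suitable $\alpha$, and then observes that the whole process consumes at most $s-1$ non-zero scalars of $\Bbbk$; this has the merit of being algorithmic (the paper reuses it in Example 2.3) and of treating finite and infinite $\Bbbk$ uniformly. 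You instead deduce existence non-constructively by a union-of-subspaces count, which is shorter and makes the threshold $|\Bbbk|\geq N$ drop out in one line. Two small gaps to fill: (i) your count $1+N(q^n-1)<q^{n+1}$ is only meaningful for finite $\Bbbk$, so you should add the standard remark that over an infinite field a finite union of proper subspaces never covers $S_1$; (ii) the step ``$l(p)\neq 0$ for all $p\in\overline{V(I)}$ $\Rightarrow$ $[l]$ is a non-zero divisor on $R$'' deserves a sentence --- each $p$ gives a degree-one prime $\overline{P_i}\subset\overline{S}$, the zero-divisors of $\overline{R}$ are $\bigcup\overline{P_i}$ by Prop.\ 4.7 of Atiyah--Macdonald, and $R$ injects into $\overline{R}$ --- which is exactly how the paper justifies this reduction; Lemma \ref{lemma:nonzeroeval} as stated requires $V(I)=\overline{V(I)}$ and does not apply directly.
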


\begin{proof}
If $I$ is mixed, then $R$ does not contain non-zero divisors. So suppose that $I$ is unmixed. Let $s = |\overline{V(I)}|$ and let $I = \overline{Q_1} \cap \cdots \cap \overline{Q_s}$ be a primary decomposition with respect to $\overline{S}$. Let 
$\overline{P_i} = \sqrt{\overline{Q_i}}$. Each $\overline{P_i}$ is generated in degree one.
The set  of zero-divisors in $\overline{R}$ equals the union of the residues of the $\overline{P_i}$:s (Proposition 4.7 in \cite{Atiyah}). 
So if we let $\overline{\NZ} = \overline{S_1} \setminus \cup \overline{P_i}$, then the set of linear non-zero divisors in $\overline{R}$ is the residues of $\overline{\NZ}$.
Let $\NZ = S_1 \cap \overline{NZ}$. Then the linear non-zero divisors of $R$ is the residues of $\NZ$. Let $\eta_i = \overline{P_i} \cap S_1$.

Suppose that $\Bbbk$ is infinite or finite and contains $|\Bbbk| \geq s$ elements.
Suppose that $v_i \in S_1$ but $v_i \notin \eta_{1} \cup \cdots \cup \eta_i$ for $i<s$ (clearly $v_1$ exists). 
If $v_i \notin \eta_{i+1}$, then let $v_{i+1} = v_i$. Otherwise, take an element $w_i \in \eta_i$ such that $w_i \notin \eta_{i+1}$ (such an element must exist, since we assume that
$v_i \notin \eta_{i+1}$). The element $v_i + \alpha w_i$ does neither belong to $\eta_i$ nor to $\eta_{i+1}$ for any non-zero 
$\alpha \in \Bbbk$. Pick $\alpha_1 \in \Bbbk \setminus \{0\}$. 
If $v_i + \alpha_1 w_i \in \eta_{i-1}$, then $v_i + \alpha w_i \notin \eta_{i-1}$ for all $\alpha \in \Bbbk \setminus \{\alpha_1\}$, since otherwise we would have
$\alpha (v_i + \alpha_1 w_i) -  \alpha_1 (v_i + \alpha w_i) = (\alpha - \alpha_1)v_i \in \eta_{i-1}$, which is a contradiction. So pick 
$\alpha_2  \in \Bbbk \setminus \{\alpha_1\}$. Clearly $\eta_i + \alpha_2 w_i \notin \eta_{i-1} \cup \eta_{i} \cup \eta_{i+1}$. It is clear that we can continue
in this way provided that there is at most $i-1$ non-zero elements in $\Bbbk$. Since $i$ ranges from $1$ to $s$, this construction uses at most 
$s-1$ non-zero elements in $\Bbbk$. 

Suppose instead that $\Bbbk$ is a field with elements $\{a_0, a_1, a_2, \ldots, a_{s-1}\}$ with $a_0 = 0$ and $a_1 = 1$. 
Consider the points $p_1 = (1:0:\cdots:0), p_2 = (1:1:0:\cdots:0), p_3 = (1:a_2:0:\cdots:0), \ldots, p_s =  (1:a_{s-1}:0:\cdots:0)$ and
$p_{s+1} = (0:1:0:\cdots:0)$ in $P^n(\Bbbk)$. Let $I(p_i)$ be the vanishing ideal with respect to $p_i$, which is prime. Let
$I = \cap_i I(p_i)$ and let $l = b_0x_0 + \cdots + b_nx_n$ be an arbitrary linear form. 
If $l$ is non-zero on $p_1, \ldots, p_s$, then 
$b_0  + b_1 a_0 \neq 0$, $b_0 + b_1 a_1 \neq 0, \ldots, b_0 + b_1 a_{s-1} \neq 0$. Now $b_1$ must equal zero, since otherwise we would have
$b_0 + b_1 a_i \neq b_0 + b_1 a_j$ when $i \neq j$ and thus, by the pigeonhole principle,  $b_0 + b_1a_i = 0$ for some $i$, which contradicts the assumption that 
$l(p_i) \neq 0$. But if $b_1 = 0$, then $l(p_{s+1}) = 0$. Thus, the ring lacks linear non-zero divisors by Lemma \ref{lemma:nonzeroeval}. 

\end{proof}



\begin{example} \label{ex:propnonzero}

Let $I = (x_0+x_1,x_0+x_2) \cap (x_0+x_1,x_0+2x_2) \cap (x_0+2x_1,x_0+2x_2) \subseteq Z_3[x_0,x_1,x_2]$. We have $V(I) = \{(1:2:2), (1:2:1), (1:1:1)\}$. To compute a non-zero divisor, we start by 
computing $v_1$. We pick an element of degree one in $(x_0+x_1,x_0+x_2)$, say $x_0 + x_1$. By changing one of the coefficients, we can assure that this element is not in $(x_0+x_1,x_0+x_2)$, so we let $v_1 = x_0+2x_1$. Since $v_1(p_2) \neq 0$ we let $v_2 = v_1$. But $v_2(p_3) = 0$, so we look for an element $w_2$ which is in $(x_0+x_1,x_0+2x_2)$ but not in 
$(x_0+2x_1,x_0+2x_2)$. 
It is clear that we can find such an element by going through the generators of $(x_0+x_1,x_0+2x_2)$ until we find an element which is not in $p_3$. Indeed, $w_2 = x_0+x_1$ is such an element. We have $v_2 + w_2 = 2x_0$. Since $2x_0(p_1) \neq 0$ we can use $[2x_0]$ (or rather $[x_0]$) as a non-zero divisor.

\end{example}

When $\Bbbk$ is finite, it is an interesting question to determine, given a degree $d$, the maximal number of points allowed to guarantee the existence of a non-zero divisor of degree $d$. Using a result due to Chevalley, one can show that there is always a non-zero divisor of degree $n$, even if all points in $P^n(\Bbbk)$ are considered. However, we will only use non-zero divisors of degree one in this paper and this problem will be dealt with in a separate paper \cite{lundqvist3}. 








\begin{proposition}\label {prop:onto}
Let $I$ be an ideal of projective dimension zero. Suppose that $\Bbbk$ contains at least $\overline{|V(I)|}$ elements.
Then there exists a linear form $l \in S_1$ such that $\mathfrak{L}: R_d \to R_{d+1}, [a] \mapsto [l][a]$ is onto, for all $d \geq \nz(R)$.
\end{proposition}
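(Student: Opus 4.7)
The strategy is to reduce the mixed case to the unmixed case and exploit that $R$ and $R^u$ look identical in degrees $\geq \nz(R)$. First I would apply Proposition~\ref{prop:nonzerodivisordegone} to the ideal $I^u$: since $I^u$ is unmixed by construction and $\overline{V(I^u)} = \overline{V(I)}$ (discarding the embedded primary component with radical $\mathfrak{m}$ does not alter the projective variety), the hypothesis on the cardinality of $\Bbbk$ transfers unchanged. This furnishes a linear form $l \in S_1$ whose residue $[l]$ is a non-zero divisor on $R^u$.

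Second, I would observe that multiplication by $l$ defines a graded injection of $R^u$ into itself. For every $d \geq \post(R^u)$ the Hilbert function of $R^u$ has stabilised, so $\dim_{\Bbbk} R^u_d = \dim_{\Bbbk} R^u_{d+1}$, and an injective $\Bbbk$-linear map between finite-dimensional spaces of equal dimension is a bijection. Consequently, multiplication by $l$ gives an isomorphism $R^u_d \xrightarrow{\sim} R^u_{d+1}$ for every $d \geq \nz(R) \geq \post(R^u)$.

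Third, I would transfer this to $R$ via the natural graded surjection $\pi \colon R \twoheadrightarrow R^u$ induced by the inclusion $I \subseteq I^u$. This map is $S$-linear and hence commutes with multiplication by $l$. For $d \geq \nz(R) \geq \post(R)$ both $R_d$ and $R^u_d$ have $\Bbbk$-dimension equal to the number of points of $\overline{V(I)}$ counted with multiplicity, so $\pi_d$ and $\pi_{d+1}$ are bijections. Conjugating the isomorphism on $R^u$ by $\pi$ shows that $\mathfrak{L} \colon R_d \to R_{d+1}$ is itself an isomorphism, and in particular surjective.

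The only technical point is the dimension count that makes $\pi_d$ an isomorphism in the stable range; this follows by comparing both $\dim_{\Bbbk} R_d$ and $\dim_{\Bbbk} R^u_d$ to the number of projective points of $\overline{V(I)} = \overline{V(I^u)}$ counted with multiplicity, in their respective stabilised ranges. Everything else reduces to the standard observation that an injective graded $S$-module map between graded pieces of equal finite dimension is automatically surjective.
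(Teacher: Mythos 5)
Your proof is correct and takes essentially the same approach as the paper's: apply Proposition~\ref{prop:nonzerodivisordegone} to the unmixed ideal $I^u$, and use the dimension identity $\dim_{\Bbbk} R_d = \dim_{\Bbbk} R^u_d = m$ for all $d \geq \nz(R)$ to transfer the non-zero-divisor property from $R^u$ to the tail of $R$. Your use of the canonical surjection $\pi\colon R \twoheadrightarrow R^u$ being an isomorphism in the stable range is just a repackaging of the paper's observation that $I_d = (J\cap Q)_d = J_d = I^u_d$ for $d \geq \nz(R)$.
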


\begin{proof}
If $R = R^u$, then, by Proposition \ref{prop:nonzerodivisordegone}, $R$ contains a nonzero-divisor of degree one which has the desired property. 
Otherwise, the maximal ideal is associated to $I$. Thus, the primary decomposition of $I$ can be written as  $I = J \cap Q$, with 
$\sqrt{Q} = \mathfrak{m}$. Let $d \geq \nz(R)$. 
Then $\dim_{\Bbbk}(R_d) = \dim_{\Bbbk}( R^{u}_d)$, which is equivalent to $\dim_{\Bbbk}(J_d) = \dim_{\Bbbk}(J \cap Q)_d$. 
Since $(J \cap Q)_d \subseteq J_d$, it follows that $J_d = (J \cap Q)_d$. So that $l$ is a nonzero-divisor on  $\oplus_{i\geq d} R^{u}_d$
implies that $l$ is a nonzero-divisor on $\oplus_{i\geq d} R_d$. 
\end{proof}

Suppose that $\{[e_1], \ldots, [e_m]\}$ is a basis for the $\Bbbk$-space $R_d, d \geq \nz(R)$. Since the map induced by $l$ is onto, the set 
$\{[e_1l], \ldots, [e_ml]\}$ forms a $\Bbbk$-basis for $R_{d+1}$. In general, for any positive integer 
$i, \{[e_1 l^i], \ldots, [e_ml^i]\}$ can be chosen as $\Bbbk$-basis for $R_{d+i}$. 
This shows the following important theorem.

\begin{theorem} \label{thm:basis}
Let $I$ be an ideal of projective dimension zero. Let $\{e_1, \ldots, e_m\}$ be a $\Bbbk$-basis
for $R_{\nz}$. 
Suppose that $R^{u}$ has a non-zero divisor $[l]$ of degree one. Then
$\{[e_1l^i], \ldots, [e_m l^i]\}$ is a $\Bbbk$-basis for  $R_{\nz + i}$, 
for all positive integers $i$.
The matrix multiplication matrix $A_j$ representing the map $R_{\nz + i} \to R_{\nz +i+1}$, $[a] \mapsto [x_j a]$ with respect to the bases above, is independent of the choice of $i$. 
\end{theorem}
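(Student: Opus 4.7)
The plan is twofold: first establish the basis claim, then deduce that the multiplication matrix $A_j$ is independent of $i$.

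For the basis claim, I will use the fact that for $d \geq \nz(R) = \max(\post(R), \post(R^u))$ we have $\dim_{\Bbbk} R_d = \dim_{\Bbbk} R_{d+1} = m$. The argument in Proposition~\ref{prop:onto} shows that the hypothesis that $[l]$ is a non-zero divisor on $R^u$ forces $l$ to remain a non-zero divisor on the tail module $\bigoplus_{d \geq \nz} R_d$. Consequently the map $R_d \to R_{d+1}$, $[a] \mapsto [la]$, is an injective $\Bbbk$-linear map between $\Bbbk$-spaces of equal dimension, hence a bijection. An easy induction on $i$ then shows that multiplication by $l^i$ is a $\Bbbk$-linear isomorphism $R_{\nz} \to R_{\nz + i}$, so it carries $\{[e_1], \ldots, [e_m]\}$ to the linearly independent spanning set $\{[e_1 l^i], \ldots, [e_m l^i]\}$.

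For the invariance of $A_j$, the plan is to reduce everything to degree $\nz + 1$. Using the basis $\{[e_r l]\}_{r=1}^m$ of $R_{\nz + 1}$, expand
\[
[x_j e_k] = \sum_{r=1}^{m} c_{kr}\, [e_r l].
\]
The scalars $c_{kr}$ are determined only by the chosen basis $\{[e_k]\}$ of $R_{\nz}$ and by $l$, with no dependence on $i$. Multiplying both sides by $[l^i]$ in the graded ring $R$ yields the identity
\[
[x_j e_k l^i] = \sum_{r=1}^{m} c_{kr}\, [e_r l^{i+1}]
\]
in $R_{\nz + i + 1}$. By uniqueness of coordinates in the basis $\{[e_r l^{i+1}]\}_{r=1}^m$ of $R_{\nz + i + 1}$, the $(k,r)$-entry of the multiplication matrix $A_j$ with respect to the chosen bases is precisely $c_{kr}$, which is independent of $i$.

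There is no serious obstacle here: the whole statement is essentially the observation that multiplication by $l^i$ is a $\Bbbk$-linear isomorphism that intertwines $x_j$-multiplication and carries basis to basis. The only technical subtlety I have to take care with is that the non-zero divisor is assumed to live on $R^u$ rather than on $R$, so I must invoke the part of Proposition~\ref{prop:onto} that lifts this property to the high-degree tail of $R$ itself before I may apply it to $R_{\nz+i} \to R_{\nz+i+1}$.
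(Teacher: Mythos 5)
Your proof is correct and follows essentially the same route as the paper: Proposition~\ref{prop:onto} gives that multiplication by $l$ is a bijection $R_d \to R_{d+1}$ for $d \geq \nz(R)$, which is exactly the argument the paper makes in the paragraph preceding the theorem. The paper leaves the independence of $A_j$ from $i$ implicit, and your explicit calculation (multiplying the degree-$\nz$ relation by $[l^i]$ and invoking uniqueness of coordinates) is precisely the intended justification.
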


\subsection{An affine connection}

To a ring $R$ of projective dimension zero, we will now associate an affine ring of dimension zero --- $R_*$, whose
multiplication matrices coincide with the projective multiplication matrices of the projective ring. 
In fact, the zero-dimensional ring $R_*$ is simply $R/([l]-[1])$. The key is the following lemma.

\begin{lemma} \label{lemmalinear}
Suppose that $\Bbbk$ contains at least $|\overline{V(I)}|$ elements. Then there is a linear change of coordinates $T$ and a variable $x_i$ such that $T(x_i)(p) \neq 0$ for all points $p \in V(I)$. 
\end{lemma}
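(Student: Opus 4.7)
The plan is to reduce the statement to finding a single linear form $l \in S_1$ satisfying $l(p) \neq 0$ for every $p \in V(I)$: once such an $l$ is produced, one simply extends $\{l\}$ to a $\Bbbk$-basis $\{l, h_1, \ldots, h_n\}$ of $S_1$ and defines the change of coordinates $T \colon S \to S$ by $T(x_0) = l$ and $T(x_j) = h_j$ for $j = 1, \ldots, n$, extended multiplicatively. Then $T$ is a $\Bbbk$-algebra automorphism preserving degree, and $T(x_0)(p) = l(p) \neq 0$ for every $p \in V(I)$, as required. So it only remains to find $l$.

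To exhibit the desired $l$, associate to each projective point $p \in V(I)$ the set $\eta_p = \{f \in S_1 : f(p) = 0\}$, which is a proper $\Bbbk$-subspace of $S_1$ of codimension one. The task becomes to find an element of $S_1 \setminus \bigcup_{p \in V(I)} \eta_p$. Since $|V(I)| \leq |\overline{V(I)}| \leq |\Bbbk|$, this is the same subspace-avoidance problem that is solved in the proof of Proposition \ref{prop:nonzerodivisordegone}. The same inductive construction applies: order $V(I) = \{p_1, \ldots, p_s\}$, pick $v_1 \notin \eta_{p_1}$, and having constructed $v_i \notin \eta_{p_1} \cup \cdots \cup \eta_{p_i}$, set $v_{i+1} = v_i$ if already $v_i \notin \eta_{p_{i+1}}$, and otherwise replace it by $v_i + \alpha w_i$ for a suitable $w_i \in \eta_{p_i} \setminus \eta_{p_{i+1}}$ and a scalar $\alpha \in \Bbbk$ chosen by the same pigeonhole argument to keep $v_{i+1}$ outside each previously avoided $\eta_{p_j}$. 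The hypothesis $|\Bbbk| \geq s$ guarantees that the needed scalars exist. The degenerate case $V(I) = \emptyset$ is trivial, taking $l = x_0$.

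The substance of the argument therefore reuses Proposition \ref{prop:nonzerodivisordegone} almost verbatim; the only point of departure is that one avoids the hyperplanes indexed by $V(I)$ rather than those coming from the associated primes over $\overline{\Bbbk}$. The main obstacle is precisely the avoidance step, but since the counting hypothesis is identical, no new ideas are required; the construction of $T$ from $l$ is purely formal.
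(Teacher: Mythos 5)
Your proposal is correct and follows essentially the same route as the paper. The paper's own proof simply takes the $l$ guaranteed by Proposition~\ref{prop:onto} (whose underlying construction is the prime-avoidance argument of Proposition~\ref{prop:nonzerodivisordegone}) and sets $T(x_i) = l$, $T(x_j) = x_j$ for $j \neq i$, where $b_i \neq 0$; you instead re-run the avoidance argument directly on the hyperplanes $\eta_p$, $p \in V(I)$, and build $T$ by extending $\{l\}$ to a basis of $S_1$. Both are the same underlying idea, and if anything yours is slightly more self-contained, since it makes explicit that the $l$ from the earlier proposition satisfies $l(p)\neq 0$ for all $p\in V(I)$ rather than treating that as an implicit byproduct of Proposition~\ref{prop:onto}.
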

\begin{proof}
Let $l$ be the form from proposition \ref{prop:onto}. We can write $l = b_0x_0 + \cdots + b_nx_n$. Some coefficient is non-zero, say $b_i \neq 0$. 
Let $T(x_i) = l$ and let $T(x_j) = x_j$ if $j \neq i$.
\end{proof}


\begin{remark} \label{rem:firstcoord}
With $T$ as above, if we let 
$y_0 = T(x_i), y_1 = T(x_0), y_2 = T(x_1), \ldots,$ 
$y_{i} =T(x_{i-1}), y_{i+1}=T(x_{i+1}), \ldots, y_{n} = T(x_n)$, then each point on $V(I)$ with respect to $y_0, \ldots, y_n$ can be written as 
$(1:a_1:\cdots:a_n)$.
\end{remark}

Let $J$ be an ideal of affine dimension zero and let
$J=q_1 \cap \cdots \cap q_s$ be a minimal primary decomposition. The multiplicity of a 
point $p \in V(J)$, belonging to the primary component $\sqrt{q_i}$, is defined as the length of $q_i$.

The same definition holds for projective points, that is,  if $I$ is an ideal of projective dimension zero and
$I =  q_1 \cap \cdots \cap q_s$, then the multiplicity of a point $p$ on $V(I)$ belonging to the primary component $\sqrt{q_i}$, is defined as the length of $q_i$. 

These two multiplicity definitions are connected in the sense that if $p=(1:a_1:\cdots:a_n)$ is a projective point with multiplicity $r$ in $V(I)$, then $(a_1,\ldots,a_n)$ is an affine point with multiplicity $r$ in $V(I+(y_0-1))$. This is a standard result and treated in \cite{Grobner} and \cite{MMM} for instance. 

\begin{lemma} \label{lemma:kbasis}
Let $I$ be an ideal of projective dimension zero. Suppose that there exists an
$l \in S_1$ such that $[l]$ is a non-zero divisor on $R^{u}$.
Put $R_* = R/([l]-[1])$. Let $\{[e_1], \ldots, [e_m]\}$ be a $\Bbbk$-basis for
 $R_d$, for $d \geq \nz(R)$. Then $\{[[e_1]], \ldots, [[e_m]]\}$ is a $\Bbbk$-basis for $R_*$, where $[[\hspace{0.2cm}]]$ denotes an equivalence class in $R$ mod $[l]-[1]$. 
\end{lemma}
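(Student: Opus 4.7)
The plan is to verify separately that $\{[[e_1]],\ldots,[[e_m]]\}$ spans $R_*$ over $\Bbbk$ and is linearly independent. The two ingredients that carry the argument are Theorem~\ref{thm:basis}, which furnishes for every $i\geq 0$ the compatible $\Bbbk$-basis $\{[e_jl^i]\}_{j=1}^m$ of $R_{d+i}$, and the tautological identity $[[l]]=[[1]]$ in $R_*=R/([l]-[1])$.

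For spanning I would take an arbitrary $[[a]]\in R_*$, lift it to $[a]\in R$, and decompose into homogeneous components $[a]=\sum_k[a_k]$. For each $k$, I pick an integer $N_k$ large enough that $k+N_k\geq d$; then $[[a_k]]=[[l^{N_k}a_k]]$ because $[[l]]^{N_k}=[[1]]$. Since $[l^{N_k}a_k]$ lies in $R_{k+N_k}$, Theorem~\ref{thm:basis} expands it as a $\Bbbk$-linear combination of the $[e_jl^{k+N_k-d}]$; pushing this expansion back into $R_*$, where the powers of $[l]$ again collapse to $[[1]]$, writes $[[a_k]]$ as a $\Bbbk$-linear combination of the $[[e_j]]$. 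Summing over $k$ handles $[[a]]$.

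For linear independence, suppose $\sum_jc_j[[e_j]]=0$ and set $[a]=\sum_jc_j[e_j]\in R_d$. Then $[a]=([l]-[1])[h]$ in $R$ for some $[h]\in R$ with homogeneous decomposition $h=\sum_{k=0}^Mh_k$. Comparing degree-$k$ components on both sides gives the recurrence $[h_k]=[lh_{k-1}]$ in $R_k$ (with the convention $h_{-1}=0$) for every $k\neq d$, together with $[a]=[lh_{d-1}]-[h_d]$ in $R_d$. An upward induction starting from the degree-$0$ equation $[h_0]=0$ forces $[h_k]=0$ for all $0\leq k<d$, while a downward induction starting from $[h_{M+1}]=0$ and using the recurrence for $k>d$ gives $[l^{M+1-d}h_d]=0$ in $R_{M+1}$.

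The critical step, and the one that really uses the hypothesis, is now concluding that $[h_d]=0$. Here I would invoke the observation announced in the paragraph introducing $\nz(R)$, namely that a non-zero divisor on $R^u$ remains a non-zero divisor on the truncation $\bigoplus_{j\geq\nz(R)}R_j$; consequently $[l]^{M+1-d}\colon R_d\to R_{M+1}$ is injective for $d\geq\nz(R)$, and so $[h_d]=0$. Substituting back into the degree-$d$ equation gives $[a]=[lh_{d-1}]-[h_d]=0$, and the $\Bbbk$-linear independence of $\{[e_1],\ldots,[e_m]\}$ in $R_d$ then forces all $c_j=0$.
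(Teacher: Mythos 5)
Your proof is correct, but it takes a genuinely different route from the paper. The paper's argument is short and relies on an external fact: both $R_{*}=R/([l]-[1])$ and $R_{\nz}$ have $\Bbbk$-dimension equal to the number of points of $\overline{V(I)}$ counted with multiplicity, so $\dim_{\Bbbk}(R_{*})=\dim_{\Bbbk}(R_{\nz})=m$; then the paper asserts in one line that the non-zero-divisor hypothesis gives linear independence of the $m$ vectors $[[e_1]],\ldots,[[e_m]]$, and the dimension count does the rest. Your proof, by contrast, is self-contained and purely algebraic. You prove spanning directly by decomposing an arbitrary lift into homogeneous pieces and using $[[l]]=[[1]]$ together with the bases $\{[e_jl^i]\}$ of the high-degree graded pieces, so you never need to know $\dim_{\Bbbk}(R_{*})$ in advance. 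And you prove linear independence by the degree-by-degree comparison in $[a]=([l]-[1])[h]$, reducing to injectivity of $[l]^{M+1-d}$ on $R_d$, which is exactly where the non-zero-divisor hypothesis enters; this fills in the one step the paper leaves terse. One small technical remark: you cite Theorem~\ref{thm:basis} for the bases $\{[e_jl^i]\}$ of $R_{d+i}$, but that theorem as stated takes $d=\nz(R)$; for general $d\geq\nz(R)$ one should appeal to Proposition~\ref{prop:onto} (surjectivity of $[l]\colon R_d\to R_{d+1}$ combined with $\dim_{\Bbbk}R_d=\dim_{\Bbbk}R_{d+1}=m$ gives an isomorphism). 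This is a minor bookkeeping issue and does not affect the soundness of the argument. Your approach buys independence from the multiplicity/length count at the cost of a longer argument; the paper's approach is shorter but less transparent about why independence holds.
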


\begin{proof}
Since the $\Bbbk$-dimension of $R/([l]-[1])$ and $R_{\nz}$ is determined by the sum of the points counting multiplicity, we have
 $\dim_{\Bbbk}( R/([l]-[1])) = \dim_{\Bbbk}(R_{\nz})=m$.
 Since $[l]$ is a non-zero divisor on the basis $R_{\nz}$, the vectors $[[e_1]], \ldots, [[e_m]]$ are linearly independent.
\end{proof}

\begin{proposition} \label{prop:onetozero}
Let $I$ be an ideal of projective dimension zero. Suppose that $\Bbbk$ contains at least $|\overline{V(I)}|$ elements.
Let $m = \dim_{\Bbbk}(R_{\nz})$. Put $R_* = R/([l]-[1]) = S/(I + ([l]-[1]))$, where $l$ is a non-zero divisor of degree one on $R^u$. Let $\{[e_1 l^i], \ldots, [e_m l^i]\}$ be a $\Bbbk$-basis for
 $R_{d+i}$ and let $\{[[e_1]], \ldots, [[e_m]]\}$ be a $\Bbbk$-basis for $R_*$, where $[[\hspace{0.2cm}]]$ denotes an equivalence class in $R$ mod
 $[l-1]$.  Then there is a change of coordinates such that the multiplication matrices with respect to 
$x_1, \ldots, x_n$ coincides for $R_*$ and $R$.
\end{proposition}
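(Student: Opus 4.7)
The plan is to perform the linear change of coordinates provided by Lemma \ref{lemmalinear} so that the non-zero divisor $l$ becomes one of the coordinate variables, and then exploit the fact that modding out by $[l]-[1]$ collapses powers of $l$ to $1$ while preserving the structure constants of multiplication by the remaining variables.

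First I apply Lemma \ref{lemmalinear} together with the relabeling of Remark \ref{rem:firstcoord} to obtain new coordinates $y_0, \ldots, y_n$ in which $y_0 = l$. After renaming $y_i$ back to $x_i$ I may assume $l = x_0$, so every $p \in V(I)$ has nonzero $x_0$-coordinate. In these coordinates the basis of $R_{\nz+i}$ promised by Theorem \ref{thm:basis} takes the form $\{[e_k x_0^i]\}_{k=1}^m$, and the relation defining $R_*$ is $[x_0]-[1]$; Lemma \ref{lemma:kbasis} then supplies $\{[[e_k]]\}_{k=1}^m$ as a basis for $R_*$.

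Next, I fix $j \in \{1,\ldots,n\}$ and let $A_j$ denote the projective multiplication matrix of $x_j$ from $R_{\nz}$ to $R_{\nz+1}$ in the chosen bases, so that $[x_j e_k] = \sum_{r=1}^m (A_j)_{kr}\,[e_r x_0]$ for every $k$. By Theorem \ref{thm:basis} this single matrix $A_j$ represents multiplication by $x_j$ between $R_{\nz+i}$ and $R_{\nz+i+1}$ for every $i \geq 0$, so there is no ambiguity on the projective side regardless of which degree one computes in.

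Finally, I push this identity into $R_*$. Lifting the equation above to $S$ yields $x_j e_k - \sum_r (A_j)_{kr}\,e_r x_0 \in I$, and taking classes modulo $[x_0]-[1]$ replaces $x_0$ by $1$, giving $[[x_j e_k]] = \sum_r (A_j)_{kr}\,[[e_r]]$. Hence the multiplication matrix of $x_j$ on $R_*$ in the basis $\{[[e_k]]\}$ is again $A_j$, so the two multiplication matrices coincide. The main obstacle is really the initial change of coordinates: once $l$ has been turned into a coordinate variable the collapse $[l] \to 1$ becomes literal, and the equality of structure constants is then immediate from Theorem \ref{thm:basis}.
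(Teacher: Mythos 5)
Your proof is correct and follows essentially the same route as the paper: apply the coordinate change from Lemma \ref{lemmalinear}/Remark \ref{rem:firstcoord} so that $l$ becomes the zeroth coordinate, observe that the defining relation $[y_k][e_i]=\sum_r a_{ir}[y_0 e_r]$ descends under $[y_0]\mapsto[1]$ to $[[y_k]][[e_i]]=\sum_r a_{ir}[[e_r]]$, and invoke Lemma \ref{lemma:kbasis} to see that $\{[[e_r]]\}$ is a basis of $R_*$, so the matrices are literally the same. The extra remarks you add (lifting to $S$, the independence of the matrix from the degree $i$ via Theorem \ref{thm:basis}) are accurate elaborations of the same argument.
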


\begin{proof}

By a change of coordinates in accordance with Remark \ref{rem:firstcoord}, we may assume that $l = y_0$.
Let $A_k = (a_{ij})$ be the projective multiplication matrix of $R$ with respect to $y_k$, such that $[y_k] [e_i] =
 a_{i1} [y_0 e_1] + \cdots +a_{im} [y_0 e_m]$.  It follows that $[[y_k]] [[e_i]] =
 a_{i1} [[e_1]] + \cdots +a_{im} [[e_m]]$. By Lemma \ref{lemma:kbasis}, $\{[[e_1]], \ldots, [[e_m]]\}$ is
a $\Bbbk$-basis for $R_*$. Hence $A_k$ is the multiplication matrix of $R_*$ with respect to $y_k$. 

\end{proof}

\subsection{Computing the variety from the projective multiplication matrices}



We now state the result of M\"oller and Stetter in the affine setting. See \cite{Mollerstetter} for a proof. 
\begin{theorem} \label{thm:mainaffine}
Let $I$ be a zero dimensional ideal. Let $\{[e_1], \ldots, [e_m]\}$ be a $\Bbbk$-basis of $\Bbbk[x_1, \ldots, x_n]/I$. 
Let $A_1, \ldots, A_n$ be the multiplication matrices with respect to this basis. 
Let $r = |V(I)|$. Then there are exactly $r$ common (right) eigenvectors for the matrices $A_1, \ldots, A_n$ and they are 
$(e_1(p_i), \ldots, e_m(p_i))^t$ for $ i = 1, \ldots, r$.  Let $\lambda_{ij}$ denote the eigenvalue of $A_j$ corresponding to the eigenvector 
$(e_1(p_i), \ldots, e_m(p_i))^t.$ Then $p_i = (\lambda_{i1}, \ldots, \lambda_{im}).$ 
\end{theorem}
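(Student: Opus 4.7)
The plan is three-fold: (i) verify that the claimed vectors are common right eigenvectors with the asserted eigenvalues, (ii) show they are linearly independent, and (iii) show that every common right eigenspace is spanned by one of them, so the count is exactly $r$. Steps (i) and (ii) reduce to direct computations; step (iii) is the crux.

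For step (i), the definition of $A_j$ reads $[x_j][e_k] = \sum_{l} (A_j)_{kl} [e_l]$ in $\Bbbk[x_1,\ldots,x_n]/I$. Evaluating both sides at a point $p_i \in V(I)$ (legitimate because $I$ vanishes at $p_i$, so evaluation factors through the quotient) gives $p_{ij}\,e_k(p_i) = \sum_l (A_j)_{kl} e_l(p_i)$, which is precisely the $k$-th component of the identity $A_j v_i = p_{ij} v_i$ with $v_i = (e_1(p_i),\ldots,e_m(p_i))^t$. So the $v_i$ are common right eigenvectors and the eigenvalue tuples reconstruct the coordinates of the $p_i$.

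For step (ii), I would use separators: for each $i$, choose $[s_i] \in \Bbbk[x_1,\ldots,x_n]/I$ with $s_i(p_i) = 1$ and $s_i(p_l) = 0$ for $l \neq i$, and expand $[s_i] = \sum_k c_{ik}[e_k]$. A hypothetical relation $\sum_l \alpha_l v_l = 0$, read componentwise as $\sum_l \alpha_l e_k(p_l) = 0$ for all $k$, contracted with the row $(c_{ik})_k$ yields $\sum_l \alpha_l s_i(p_l) = \alpha_i = 0$. For step (iii), let $v \in \Bbbk^m$ satisfy $A_j v = \lambda_j v$ for $j = 1,\ldots,n$, and set $\lambda = (\lambda_1,\ldots,\lambda_n)$. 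Define a $\Bbbk$-linear functional $\psi : \Bbbk[x_1,\ldots,x_n]/I \to \Bbbk$ by $\psi([e_k]) = v_k$. The eigenvalue equations, read through the defining relation of $A_j$, give $\psi([x_j][e_k]) = \lambda_j \psi([e_k])$ for every $k$, hence by $\Bbbk$-linearity $\psi([x_j]\,h) = \lambda_j \psi(h)$ for all $h$. A monomial induction together with linearity upgrades this to $\psi([g]\,h) = g(\lambda)\,\psi(h)$ for every polynomial $g$. Taking $h = [1]$ with $g \in I$ forces $g(\lambda)\psi([1]) = 0$; and the same identity with $g$ arbitrary shows that $\psi([1]) = 0$ would imply $\psi \equiv 0$ and $v = 0$. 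Therefore $\psi([1]) \neq 0$, whence $\lambda \in V(I)$, say $\lambda = p_i$, and $v_k = \psi([e_k]) = \psi([1])\,e_k(p_i)$, so $v$ is a nonzero scalar multiple of $v_i$.

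The main obstacle I expect is the bootstrapping inside step (iii): promoting the per-variable relation $\psi([x_j]\,h) = \lambda_j\psi(h)$ into the full multiplicative identity $\psi([g]\,h) = g(\lambda)\psi(h)$. This is where commutativity of the ring (equivalently, pairwise commutation of the matrices $A_j$) is used, and it is the conceptual heart of the argument; once $\psi$ is shown to behave like evaluation at $\lambda$, the conclusions that $\lambda \in V(I)$ and that $v$ is proportional to a unique $v_i$ are immediate. A minor point is to remain careful about the base field: the $v_i$ with entries in $\Bbbk$ are precisely those corresponding to $\Bbbk$-rational points, which matches the theorem's count $r = |V(I)|$ rather than $|\overline{V(I)}|$.
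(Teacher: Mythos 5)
The paper does not prove Theorem~\ref{thm:mainaffine}; it states it as the known result of M\"oller and Stetter and points to \cite{Mollerstetter} for the proof, so there is no in-text argument to compare against. Your proof is a correct and self-contained derivation of the standard eigenvalue theorem: the verification in (i) uses exactly the paper's convention that $A_j$ is the transpose of the usual matrix representation, the separator argument in (ii) is the standard linear-independence step, and the bootstrapping in (iii) via the functional $\psi$ with $\psi([e_k])=v_k$ correctly upgrades the per-variable eigenrelations to $\psi([g]h)=g(\lambda)\psi(h)$ (with the commutativity of the $A_j$, inherited from the ring, doing the work in the monomial induction) and then rules out $\psi([1])=0$. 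Your closing remark on $\Bbbk$-rationality is also on point: since $A_j$ and a $\Bbbk$-eigenvector $v$ have entries in $\Bbbk$, the eigenvalues lie in $\Bbbk$, so the eigendirections detected over $\Bbbk$ are exactly the $\Bbbk$-rational points, matching the theorem's count $r=|V(I)|$ rather than $|\overline{V(I)}|$.
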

\noindent
We have an almost identical theorem in the projective setting.
\begin{theorem} \label{thm:mainproj}
Let $I \subseteq \Bbbk[x_0, \ldots, x_n]$ be an ideal of projective dimension zero. Suppose that
$\Bbbk$ contains at least $|V(I)|$ elements. 
Let $\{[e_1], \ldots, [e_m]\}$ be a $\Bbbk$-basis of $R_{\nz}$ and let $l$ be a linear form such that $\{[e_1l], \ldots, [e_ml]\}$ is a $\Bbbk$-basis for
$R_{\nz +1}$.  Let $A_0, \ldots, A_n$ be the projective multiplication matrices with respect to this basis.
Let $r = |V(I)|$. Then there are exactly $r$ common (right) eigenvectors for the matrices $A_0, \ldots, A_n$ and they are 
$(e_1(p_i), \ldots, e_m(p_i))^t$ for $ i = 1, \ldots, r$.  Let $\lambda_{ij}$ denote the eigenvalue of $A_j$ corresponding to the eigenvector 
$(e_1(p_i), \ldots, e_m(p_i))^t.$ Then $p_i = (\lambda_{i0}:\lambda_{i1}: \cdots: \lambda_{im}).$ 
\end{theorem}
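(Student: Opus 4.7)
The plan is to reduce the projective statement to the affine version (Theorem \ref{thm:mainaffine}) via Proposition \ref{prop:onetozero} and the auxiliary ring $R_* = R/([l]-[1])$. First I would invoke Lemma \ref{lemmalinear} to pick a linear non-zero divisor $l$ on $R^u$ that evaluates nonzero at every point of $V(I)$, and then change coordinates as in Remark \ref{rem:firstcoord} so that $l = y_0$ and every $p \in V(I)$ has a normalized affine representative of the form $(1, a_1, \ldots, a_n)$. Proposition \ref{prop:onetozero} then identifies, in the basis $\{[[e_1]], \ldots, [[e_m]]\}$ of $R_*$ coming from the given basis $\{[e_1], \ldots, [e_m]\}$ of $R_{\nz}$, the multiplication matrices of $R_*$ with respect to $y_1, \ldots, y_n$ with the projective multiplication matrices $A_1, \ldots, A_n$ of $R$.

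Next I would dispose of $A_0$. By the very choice of basis $\{[e_i y_0]\}$ in degree $\nz(R)+1$, the map $[a]\mapsto [y_0 a]$ sends $[e_i] \mapsto [e_i y_0]$, so $A_0$ is the $m \times m$ identity matrix. Hence every vector is a right eigenvector of $A_0$ with eigenvalue $1$, and the common right eigenvectors of $A_0, A_1, \ldots, A_n$ coincide with those of $A_1, \ldots, A_n$; moreover for any such eigenvector the associated eigenvalue $\lambda_{i0}$ is forced to equal $1 = y_0(p_i)$ under the chosen normalization.

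Now I would apply Theorem \ref{thm:mainaffine} to $R_*$. Under the standard bijection between $V(I)$ and $\V(I + (y_0 - 1))$ sending each $p_i$ to its dehomogenization, the two varieties have the same cardinality $r$. The affine theorem then yields exactly $r$ common right eigenvectors of the multiplication matrices of $R_*$, namely $(e_1(p_i), \ldots, e_m(p_i))^t$, with eigenvalues $\lambda_{ij} = y_j(p_i)$ for $j = 1, \ldots, n$. Transporting back through the identification from Proposition \ref{prop:onetozero} and combining with $\lambda_{i0} = 1 = y_0(p_i)$ yields the claimed projective equality $p_i = (\lambda_{i0} : \lambda_{i1} : \cdots : \lambda_{in})$.

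The main obstacle I anticipate is bookkeeping rather than conceptual: one must verify that the value of a class $[e_k] \in R_{\nz}$ at the projective point $p_i$ (via the chosen affine representative) really coincides with the value of its image $[[e_k]] \in R_*$ at the corresponding affine point, so that the eigenvector expressions furnished by Theorem \ref{thm:mainaffine} agree verbatim with those asserted here. This compatibility holds precisely because the normalization $y_0(p_i) = 1$ was arranged in advance, which is why the preliminary choice of coordinates is essential to the argument.
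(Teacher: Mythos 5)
Your high-level plan --- reduce to the affine Theorem \ref{thm:mainaffine} via a change of coordinates, Proposition \ref{prop:onetozero}, and the ring $R_* = R/([l]-[1])$ --- is exactly the route the paper takes. However, there is a genuine gap in the middle: you claim that ``$A_0$ is the $m \times m$ identity matrix,'' but that is only true for the multiplication matrix with respect to the linear form $l = y_0$, not for $A_0$ as defined in the theorem statement, which is multiplication by the original coordinate $x_0$. After changing coordinates as in Remark \ref{rem:firstcoord}, writing $l = b_0 x_0 + \cdots + b_n x_n$, the identity matrix in this picture is
\[
B_0 = b_0 A_0 + \cdots + b_n A_n,
\]
while the matrices $B_1, \ldots, B_n$ (multiplication by $y_1, \ldots, y_n$) are a proper subset of $\{A_0, \ldots, A_n\}$ missing $A_i$. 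Your argument, as written, establishes the conclusion for the common eigenvectors of $B_0, \ldots, B_n$ and the $y$-coordinates of the points, not for $A_0, \ldots, A_n$ and the original $x$-coordinates that the theorem actually asserts.

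The missing step is the linearity observation that the paper makes explicit: since $\{B_0, \ldots, B_n\}$ and $\{A_0, \ldots, A_n\}$ span the same space of matrices (the change of basis is invertible precisely because $b_i \neq 0$), a vector is a common eigenvector of one family if and only if it is a common eigenvector of the other; moreover, because evaluation is linear in the form, the eigenvalue of $A_j$ on $e(p)$ equals $x_j(p)/l(p)$, which recovers the $x$-coordinates. Without this bridge, you prove a theorem about a different tuple of matrices. If you add that linearity argument you will have recovered the paper's proof essentially verbatim.
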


\begin{proof}
By Proposition \ref{prop:onto}, there exists a linear form $l = b_0x_0 + \cdots + b_nx_n$ such that $\{[e_1 l], \ldots, [e_m l]\}$ forms a $\Bbbk$-basis
for $R_{\nz+1}$. By Remark \ref{rem:firstcoord}, there is a change of coordinates such that the multiplication matrix $B_i$ with respect to $y_i$
satisfies $B_0 = b_0A_0 + \cdots b_nA_n$ and $B_1 = A_0, B_2 = A_1, \ldots, B_i = A_{i-1}, B_{i+1} = A_{i+1}, \ldots, B_{n} = A_n$.

By Proposition \ref{prop:onetozero}, the projective multiplication matrices $B_1, \ldots, B_m$ of $R$ with respect to $y_1, \ldots, y_m$ 
agree with the multiplication matrices for $R/(l-1)$ with respect to $y_1, \ldots, y_m$.

Since the multiplication matrix with respect to $y_0$ is the identity, a common eigenvector for $B_1, \ldots, B_n$ 
is also a common eigenvector for $B_0, \ldots, B_n$ and vice versa. But by linearity, $v$ is a common eigenvector to 
$B_0, \ldots, B_n$ if and only if $v$ is an eigenvector to $A_0, \ldots, A_n$. 
Hence, the set of common eigenvectors for $A_0,  \ldots, A_n$ equals 
$e_1(p_i), \ldots, e_m(p_i)$, for $i = 1, \ldots, r$, by Theorem \ref{thm:mainaffine}.
\end{proof}

To determine the multiplicity of a point $p \in V(I)$, one can use the result of Corless et al in \cite{corless}. The method goes as follows.
Let $A$ be a generic linear combination of the multiplication matrices. Let $\lambda$ be the eigenvalue of $A$ with respect to $e(p)$ (clearly $e(p)$ is an eigenvector
of $A$). Then the multiplicity of $p$ equals the algebraic multiplicity of $\lambda$.
There are also direct methods which one could use, see for instance \cite{MMMtrans} and \cite{Mollertenberg}.

%


\begin{example} \label{example:main}
The elements
\begin{alignat*}{2}
f_1& = xz+yz-z^2\\
f_2& =x^2-y^2+2yz-z²\\
f_3& = xy-y²+yz
\end{alignat*}
generates an unmixed ideal $I$ of projective dimension zero in $\mathbb{C}[x,y,z]$.
Choosing ${[x],[y],[z]}$ and ${[y^2], [yz], [z²]}$ as bases in degree $1$ and $2$ respectively, we see that neither $[x]$, $[y]$ nor $[z]$ serve as nonzero-divisors. Indeed, if we let $M_x, M_y$ and $M_z$ denote the multiplication matrices from $R_1$ to $R_2$ with respect to the bases chosen above, we compute

$$M_x = 
\left(
\begin{array}{rrr}
1 & -2 & 1 \\
 1 & -1 & 0 \\
 0 & -1 & 1
\end{array}
\right),
M_y = 
\left(
\begin{array}{rrr}
1 & -1 & 0 \\
 1 & 0 & 0 \\
 0 & 1 & 0
\end{array}
\right), 
M_z = 
\left(
\begin{array}{rrr}
0 & -1 & 1 \\
 0 & 1 & 0 \\
 0 & 0 & 1
\end{array}
\right)
$$
and we can see that all the matrices have a nontrivial kernel. However, $M_y + M_z$ has full rank which is equivalent 
to $[y+z]$ being a nonzero-divisor. 
Hence, if we use $\{[x(y+z)], [y(y+z)], [z(y+z)]\}$ as a $\Bbbk$-basis in degree two, we can construct the projective multiplication matrices 
$A_x, A_y$ and $A_z$. From these matrices the solutions can be read off. Now 
$[x(y+z)] = [y^2]-2 [z y]+[z^2], [y(y+z)] = [y^2]+[zy], [z(y+z)]=[yz]+ [z^2]$ by making use of the multiplication matrices above. Thus, with  
$$T = 
\left(
\begin{array}{rrr}
1& 1& 0\\
-2&1& 1\\ 
1 &0& 1
\end{array}
\right)
$$ we have

$A_x = M_x (T^t)^{-1}$ and similarly for $A_y$ and $A_z$, so that$$
A_x = \frac{1}{2}
\left(
\begin{array}{rrr}
 2 & 0 & 0 \\
 1 & 1 & -1 \\
 1 & -1 & 1
\end{array}
\right),
A_y = \frac{1}{4}
\left(
\begin{array}{rrr}
 2 & 2 & -2 \\
 1 & 3 & -1 \\
 -1 & 1 & 1
\end{array}
\right)
,$$
and $$
A_z = \frac{1}{4}
\left(
\begin{array}{rrr}
 2 & -2 & 2 \\
 -1 & 1 & 1 \\
 1 & -1 & 3
\end{array}
\right)
.$$

Common eigenvectors for the matrices are $(1,1,0), (1,0,1)$ and $(0,1,1)$. The eigenvalues corresponding to 
$(1,1,0)$ are $1,1,0$ for $A_x, A_y$ and $A_z$ respectively. Likewise, the eigenvalues corresponding to $(1,0,1)$ are $1,0,1$ 
and the eigenvalues corresponding to $(0,1,1)$ are $0,1,1$. Thus, $V(I) =  \{(1:1:0), (1:0:1), (0:1:1)\}$.

Notice that since $\nz(R) = 1$, we can also use the correspondence between eigenvectors and the $\Bbbk$-basis to obtain the points. Indeed 
$([x](p_1),[y](p_1),[z](p_1)) $ $= (1,1,0)$, thus we have $p_1 = (1:1:0)$, etc.

\end{example}

\section{Applications and computational aspects}

A convenient way to think of a ring $R=S/I$ of projective dimension zero is as
$$\tilde{R} = R_0 \oplus R_1 \oplus \cdots \oplus R_{nz}$$ together with the linear map
$l$ and the multiplication matrices $A_1, \ldots, A_n$. We write this information as a triplet $(\tilde{R}, A, l)$. 
The $\Bbbk$-dimension of the graded pieces of $\tilde{R}$ describes the configuration of the points and also tells whether or not the maximal ideal is associated, while  
the multiplication matrices encode the variety as a set. In Section \ref{sec:nf} we will see that we obtain a fast normal form algorithm by using the triplet. With this perspective, 
the classical way of determining a Gr\"obner basis for $I$ 
misses a lot of information about the ring. It also turns out that we compute
unnecessary data. For instance, a Gr\"obner basis for the ideal $I = (xz + yz -z^2, x^2-y^2+2yz-z^2,xy-y^2+yz)$ 
from Example \ref{example:main} with respect to $x>y>z$ and DegRevLex is $(xz + yz -z^2, x^2-y^2+2yz-z^2,xy-y^2+yz, y^2z-yz^2)$. 
Since $\nz(R) = 1$, we only need to consider the $\Bbbk$-spaces $R_1$ and $R_2$ to determine the variety, and for this purpose, the term $y^2z-yz^2$ in the Gr\"obner basis is superfluous. In Section \ref{sec:gb} we will show that the maximal degree of a term in a Gr\"obner basis is 
$\max(\nz(R),m)$.  Since it is enough to compute up to degree $\nz(R)$ in order to determine the variety, this
indicates that Gr\"obner techniques are not always optimal. 
Unfortunately, it is hard to detect $\nz(R)$.

\subsection{Computing normal forms with respect to $(\tilde{R},A,l)$} \label{sec:nf}

As an application of the multiplication matrices, we obtain a fast 
normal form algorithm for high degree elements of $S$. 
Suppose that we have a normal form algorithm $\nf(*,B)$ for elements of degree less than or equal to $\nz(R)$.
To extend this method to elements of degree $> \nz(R)$, we proceed as follows.
Let $a \cdot b$ be a monomial in $S$ and suppose that $|b| = \nz(R)$.  We use the normal form algorithm for low degree elements to obtain 
$\nf(b,B) = b_1 e_1 + \cdots + b_m e_m$. To determine $\nf(ab,B)$, write 
$a = x_1^{a_1} \cdots x_n^{a_n}$. It is straightforward to check that 
$$\nf(ab,B) = (b_1, \ldots, b_m) A_1^{a_1} \cdots A_n^{a_n} (l^{|a|} e_1, \ldots, l^{|a|} e_m)^t.$$ Thus, the arithmetic complexity of 
the normal form algorithm is $O(|a|m^3)$ if one uses naive matrix multiplication or 
$O(|a|m^{2.376})$ if one uses state of the art methods \cite{matrix}. To this one needs to add 
the complexity for computing $\nf(b,B)$. 

\begin{example}

Suppose that we want to compute the normal form of $x^{17}$ with respect to the ideal $I$ from Example \ref{example:main}.
We have seen that $\{[x(y+z)^i], [y(y+z)^i], [z(y+z)^i]\}$ forms a $\Bbbk$-basis for $R/I$ and that 
$$A_x = \frac{1}{2}
\left(
\begin{array}{rrr}
 2 & 0 & 0 \\
 1 & 1 & -1 \\
 1 & -1 & 1
\end{array}
\right).
$$

Since $[x]$ is a basis element in degree one, the normal form of $x^{17}$ equals 
$$(1,0,0) A^{16} (x(y+z)^{16}, y(y+z)^{16}, z(y+z)^{16})^t.$$ Since $A^{2} = 2 \cdot A$, we have
$A^{16} = 2^{15}A$. Hence $$\nf(x^{17}, x(y+z)^{16}, y(y+z)^{16}, z(y+z)^{16}) = 2^{15} x(y+z)^{16}.$$ 
\end{example}
If we know the variety of $I$, then the normal form computation can be simplified, see Example \ref{ex:multalg}.

\subsection{Upper bound of the elements in a Gr\"obner basis} \label{sec:gb}

To give an upper bound of the maximal degree of an element in a Gr\"obner basis with respect to an ideal of projective dimension zero, we will use 
Gotzmann's persistence theorem.

Recall that if $h$ and $i$ are positive integers, then $h$ can be uniquely written as a sum
$$h = \binom{n_i}{i} + \binom{n_{i - 1}}{i-1} + \cdots + \binom{n_j}{j},$$
where 
$$n_i > n_{i-1} > \cdots > n_j \geq j \geq 1.$$ 
See \cite{robbiano} for an easy proof.
This sum is called the binomial expansion of $h$ in base $i$.
Define 
$$h^{<i>} = \binom{n_i + 1}{i + 1} + \binom{n_{i-1} + 1}{i}+ \cdots +
\binom{n_j + 1}{j+1}.$$

Before stating Gotzmann's theorem, recall that 
the Hilbert function of a graded algebra $R$ is the map $d \mapsto \dim_{\Bbbk}(R_d)$.

\begin{theorem}[Gotzmann's persistence theorem \cite{gotzmann}]
Let $\Hf$ be the Hilbert function of $k[x_1, \ldots, x_n] / I$, 
for any homogeneous ideal $I$. Let $t$ denote the 
maximal degree of the generators of $I$. Then $\Hf(d+1) = \Hf(d)^{<d>}$ for some $d \geq t$ implies that 
$\Hf(d+2) = \Hf(d+1)^{<d+1>}, \Hf(d+3) = \Hf(d+2)^{<d+2>} $ and so on.
\end{theorem}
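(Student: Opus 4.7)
The plan is to reduce Gotzmann's persistence theorem to a combinatorial statement about monomial ideals, using Macaulay's classical bound as the starting point. Recall that Macaulay's theorem gives the upper bound $\Hf(e+1) \leq \Hf(e)^{<e>}$ for every $e \geq 1$ and any homogeneous ideal, with equality realized (for a prescribed admissible value of $\Hf(e)$) by the unique lex-segment monomial ideal. This immediately gives the hard direction $\Hf(d+2) \leq \Hf(d+1)^{<d+1>}$; the content of the theorem is the reverse inequality, i.e.\ that once the growth is minimal at degree $d$, it remains minimal thereafter.

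My first step would be to pass to the generic initial ideal $J = \mathrm{gin}(I)$ with respect to the reverse-lex term order, which preserves the Hilbert function of $S/I$ and produces a strongly stable (Borel-fixed) monomial ideal. A small technical point is that the generic change of coordinates does not raise generator degrees, so one still has all generators of $J$ in degrees $\leq t$. For strongly stable ideals the Eliahou--Kervaire decomposition provides an explicit combinatorial description of the graded pieces stratified by the largest variable appearing in each generator, which is the tool I would use to read off the Hilbert function directly from the monomial generators.

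The core argument is then a lex-comparison. Writing out the binomial expansion $\Hf(d) = \binom{n_d}{d} + \binom{n_{d-1}}{d-1} + \cdots + \binom{n_j}{j}$, equality $\Hf(d+1) = \Hf(d)^{<d>}$ forces the set of monomials spanning $(S/J)_d$ to coincide with the complement of a lex-segment of the appropriate cardinality. Since $d \geq t$, no new generators of $J$ appear in degree $d+1$, so $J_{d+1} = S_1 \cdot J_d$; the combinatorics of lex-segments under multiplication by $S_1$ then yield $\Hf(d+2) = \Hf(d+1)^{<d+1>}$, and, crucially, the same lex-segment structure is reproduced on the complement in degree $d+1$. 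Induction on the degree now propagates the equality to all subsequent degrees.

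The main obstacle will be the lex-segment reproduction step: proving that the lex-segment structure is preserved under multiplication by $S_1$ and, equivalently, that the iterated Macaulay operation satisfies $(h^{<d>})^{<d+1>} = h^{<d,d+1>}$ on the nose. This is a finicky binomial identity that lies at the combinatorial heart of the theorem, and handling the boundary case $n_j = j$ in the binomial expansion carefully is where most of the bookkeeping concentrates. Once this combinatorial persistence is established at the monomial level, the conclusion for $I$ follows because gin preserves the Hilbert function throughout.
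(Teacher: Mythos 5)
The paper does not prove this statement; it is quoted verbatim as a classical result with a citation to Gotzmann's original 1978 paper, so there is no internal proof to compare against. Your outline resembles the now-standard exposition via generic initial ideals (Bayer--Stillman, Green, Herzog--Hibi) rather than Gotzmann's original argument, which went through flatness and the Quot scheme, so this is a different route from the cited source. That said, as a proof sketch it has one concrete error and one large admitted gap.

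The error is the claim that passing to $J=\mathrm{gin}(I)$ keeps all generators in degree $\leq t$. A generic linear change of coordinates preserves generator degrees, but taking the initial ideal afterwards can and does raise them: already for $I=(x^2,y^2)\subset k[x,y]$ the reverse-lex gin is $(x^2,xy,y^3)$, which has a generator in degree $3>2$. So you cannot conclude ``$d\geq t$ hence no new generators of $J$ in degree $d+1$.'' The fact you actually want at this point --- namely $J_{d+1}=S_1J_d$ --- does hold, but for a different reason: the hypothesis $\Hf(d+1)=\Hf(d)^{<d>}$ together with Macaulay's bound applied to the ideal generated by $J_d$ forces $\dim J_{d+1}$ to equal $\dim S_1J_d$, hence $J_{d+1}=S_1J_d$. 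This repair costs you nothing at degree $d+1$, but it makes clear that the induction does not come for free: to propagate to degree $d+2$ you must \emph{prove} minimal growth there, not assume absence of new generators.

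The large gap is that the entire content of Gotzmann persistence sits in the ``lex-segment reproduction step'' that you identify and then defer. In fact, the reduction to a strongly stable $J$ does not make the complement of $J_d$ a lex segment in general (strongly stable $\neq$ lex), so the statement you would need is a structural lemma of the form: \emph{if $J$ is strongly stable and achieves equality in Macaulay's bound at degree $d$, then it achieves equality at degree $d+1$.} Proving this requires analysing the Eliahou--Kervaire data (the maximum-variable statistic $\max(u)$ of the minimal generators) and showing that equality forces a very rigid distribution of these statistics, which then persists under multiplication by $S_1$. Without that lemma the argument is circular --- you would be using persistence to prove persistence. In short, the strategy is viable and well-aligned with modern treatments, but the generator-degree claim is false as stated, and the combinatorial heart of the theorem remains unproved.
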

In the case of projective points, we have 
$\Hf(d+1) =\Hf(d) = m$ when $d \geq\nz(R)$, thus we have
$\Hf(d+1)^{<d+1>} = \Hf(d)^{<d>} = m$ for $d \geq \nz(R)$ and hence
$$\Hf(d)^{<d>} = \binom{d+1}{d+1} + \cdots + \binom{d-(m-2)}{d-(m-2)}.$$ 

A Lex-segment set $L_d$ on $\{x_1, \ldots, x_n\}$ is
the $|L_d|$ biggest monomials of degree $d$ in $\Bbbk[x_1, \ldots, x_n]$ 
with respect to the lexicographical ordering. When $L$ is a collection of Lex-segment sets, let 
$I(L)$ denote the ideal generated by the elements in the Lex-segment sets. We call $I(L)$ a Lex-segment ideal.
When $I$ is a homogeneous ideal, let $|\ini(I)^c_d|$ denote the number of monomials outside 
$\ini(I)$ of degree $d$. Notice that $|\ini(I)^c_d|$ is independent of monomial ordering.

Let $I$ be a homogeneous ideal generated in degree less than or equal to $d$ and let $L$ be a collection of 
Lex-segment sets with maximal degree $d$. A property among Lex-segment ideals is that they have minimal growth (or maximal co-growth), in the sense 
that if $|\ini(I)^c|$ and $|\ini(I(L))^c|$ agrees until degree $d$, then 
$$|\ini(I(L))^c_{d'}| \geq |\ini(I_{\leq d})^c_{d'}| \text{ for all } d'\geq d.$$ See for instance \cite{robbiano}. 

\begin{theorem} \label{thm:maximaldegree}
Let $I$ be an ideal of projective dimension zero. A bound for the maximal degree of an element in a reduced Gröbner basis is $\max(\nz(R), m)$.
\end{theorem}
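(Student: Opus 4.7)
The plan is to reduce the statement to bounding the degrees of the minimal generators of $\ini(I)$, since the elements of a reduced Gr\"obner basis are in degree-preserving bijection with these minimal generators; thus it suffices to show that for any fixed monomial order, $\ini(I)$ is generated in degrees at most $d:=\max(\nz(R),m)$.

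I would write $J=\ini(I)$, a monomial ideal with Hilbert function equal to $\Hf_{S/I}$, and choose a collection $L$ of Lex-segment sets of maximal degree $d$ such that $|\ini(I(L))^c_{d'}|=|J^c_{d'}|$ for every $d'\leq d$; such $L$ exists by Macaulay's characterization of Hilbert functions, and in particular $|\ini(I(L))^c_d|=m$ since $d\geq\nz(R)$. The key step is to squeeze $|\ini(I(L))^c_{d+1}|$ between $m$ from below and $m$ from above. For the lower bound, I would apply the minimal growth property recorded in the excerpt, with $J$ playing the role of the reference ideal, to obtain
\[|\ini(I(L))^c_{d+1}|\;\geq\;|(J_{\leq d})^c_{d+1}|\;\geq\;|J^c_{d+1}|\;=\;m,\]
the last inequality using $J_{\leq d}\subseteq J$. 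For the upper bound, Macaulay's theorem together with the computation $m^{<d>}=m$ from the paragraph preceding the theorem (valid because $d\geq m$) gives $|\ini(I(L))^c_{d+1}|\leq m^{<d>}=m$.

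Equality then gives the hypothesis $\Hf_{S/I(L)}(d+1)=\Hf_{S/I(L)}(d)^{<d>}$ required by Gotzmann's persistence theorem, so $|\ini(I(L))^c_{d'}|=m$ for every $d'\geq d$. Re-substituting this into the minimal growth inequality forces $|(J_{\leq d})^c_{d'}|=m=|J^c_{d'}|$ in every such degree, hence $J_{\leq d}=J$, which is exactly the claim that every minimal generator of $\ini(I)$ has degree at most $d$.

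The main obstacle I anticipate is the bookkeeping when applying the minimal growth property to the monomial ideal $J=\ini(I)$ rather than to $I$: one has to note that $J$ is already monomial, so $\ini(J_{\leq d})=J_{\leq d}$ and the property goes through unchanged. A minor point is verifying that Macaulay's \emph{O}-sequence characterization produces a Lex-segment collection $L$ matching the required complement counts in every degree up to $d$, and that the condition $d \geq m$ in the bound is genuinely needed, since for $d<m$ the quantity $m^{<d>}$ strictly exceeds $m$ and the upper-bound half of the squeeze breaks.
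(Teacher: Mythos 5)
Your argument matches the paper's proof in strategy and ingredients: choose a Lex-segment collection $L$ whose complement counts agree with those of $\ini(I)$ through degree $d=\max(\nz(R),m)$, invoke the minimal-growth property of Lex-segment ideals together with Gotzmann's persistence theorem, and conclude from the resulting equality of complement counts that no reduced Gr\"obner basis element can have degree exceeding $d$. You are somewhat more explicit than the paper in establishing the hypothesis $\Hf_{S/I(L)}(d+1)=\Hf_{S/I(L)}(d)^{<d>}$ before applying Gotzmann (the two-sided squeeze via Macaulay and minimal growth), a step the paper's terse chain of inequalities leaves implicit, but the underlying approach is the same.
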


\begin{proof}
Let $d = \max(\nz(R), m)$. Suppose that $L$ is a collection of Lex-segment sets of degrees less than or equal to $d$, 
such that $|\ini(I(L))^c|$ agrees with $|\ini(I)^c|$ until degree $d$. We then have
$$ m = |\ini(I(L))_{d'}^c| \geq |\ini(I_{\leq d})_{d'}^c| \geq |\ini(I_{\leq d'})_{d'}^c| = m.$$
This implies that 
$|\ini(I_{\leq d})_{d'}^c| = |\ini(I_{\leq d'})_{d'}^c|$ for all $d'\geq d$ 
and hence there can not be any Gr\"obner basis element of degree greater than $d$. 
\end{proof}
This theorem is a generalization of the result in \cite{Abbottetal}, 
where it is shown that the last degree element of a Gr\"{o}bner basis is $m$ in the case when $I$ is unmixed. 
The bound in Theorem \ref{thm:maximaldegree} is sharp. Indeed, 
in Example \ref{example:main}, $\nz(R) = 1, m=3$ and a reduced Gr\"obner basis with respect to DegRevLex had a generator in degree three, while in 
Example \ref{example:localnonzerodivisor} below, we will see that $\nz(R) = 3, m=1$ and a reduced Gr\"obner basis with respect to DegRevLex is 
$\{xy - z^2, x^2-xz, y^2 - z^2, xz^2 - yz^2, -yz^2 + z^3\}$.


\subsection{Computing $(\tilde{R},A,l)$ given the ideal}

Suppose that we are given an ideal by its generators and that we know that $\dim_{\Bbbk}(R_d) = \dim_{\Bbbk}(R_{d+1})$ for some $d$. What conclusions can be made from this information?
Unfortunately, not many. We do not know the dimension --- indeed --- the rings 
$\Bbbk[x,y,z]/(xy,yz,xz), \Bbbk[x,y,z]/(x^2,y^2,z^2)$ and $\Bbbk[x,y,z]/(x^2,xy,xz)$ all have 
$\Bbbk$-dimension three in degrees one and two. The first ring is of projective dimension zero and postulates in degree one. The second ring
is artinian, while the third ring is of projective dimension one. However, we have the following simple observation.
\begin{lemma} \label{lemma:art}
Let $I$ be a graded ideal in $S$ and suppose that there is an element $[f] \in R_i$ such that 
$[f]R_d = R_{d+i}$. Then $R$ is either artinian or of projective dimension zero. 

\end{lemma}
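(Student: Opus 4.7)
The plan is to reduce the statement to the classical fact that a finitely generated standard graded $\Bbbk$-algebra of Krull dimension at most one has eventually constant Hilbert function. Concretely, I would show that $R/[f]R$ is a finite-dimensional $\Bbbk$-algebra; this forces $\dim R \leq \dim R/[f]R + 1 = 1$, and then the Hilbert polynomial of $R$ has degree $\leq 0$, i.e.\ it is a constant $m \geq 0$. The case $m = 0$ is exactly the artinian case, and the case $m > 0$ gives projective dimension zero, matching the dichotomy in the statement.

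The key step is to propagate the given surjectivity to all higher degrees. Since $S$ is standard graded, so is $R$, and $R_{d+k} = R_d \cdot R_k$ for every $k \geq 0$. Hence
\[
R_{d+i+k} \;=\; R_{d+i}\cdot R_k \;=\; ([f]R_d)\cdot R_k \;=\; [f](R_d\cdot R_k) \;=\; [f]R_{d+k},
\]
so $[f]R_e = R_{e+i}$ for every $e \geq d$, which in turn says $(R/[f]R)_j = 0$ for all $j \geq d+i$. Thus $R/[f]R$ is concentrated in finitely many degrees and is finite-dimensional over $\Bbbk$.

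From here one invokes two standard inputs: first, that quotienting by a single element drops Krull dimension by at most one, giving $\dim R \leq \dim R/[f]R + 1 = 1$; and second, the Hilbert--Serre theorem, which says that the Hilbert function $e \mapsto \dim_{\Bbbk}(R_e)$ of the finitely generated standard graded algebra $R$ eventually agrees with a polynomial of degree $\dim R - 1 \leq 0$. This polynomial is therefore a constant $m \geq 0$, and $m = 0$ precisely when $R$ is artinian. I expect the main conceptual point to be the propagation argument above; once $R/[f]R$ is shown to be artinian, the rest is a quotation of standard commutative algebra.
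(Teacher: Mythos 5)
Your proof is correct and takes essentially the same route as the paper, which simply asserts that $S/(I+(f))$ is artinian and then concludes that $S/I$ is of at most projective dimension zero. You fill in the two details the paper leaves implicit: the propagation argument showing $[f]R_e = R_{e+i}$ for all $e \geq d$ (hence $R/[f]R$ is artinian), and the standard dimension/Hilbert--Serre reasoning that passes from $\dim R/[f]R = 0$ to $\dim R \leq 1$.
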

\begin{proof}
The ring $S/(I+(f))$ is artinian, hence $S/I$ is of at most projective dimension zero.
\end{proof}

\begin{lemma} \label{lemma:projzero}
Suppose that $(f_1, \ldots, f_n) = I$ is generated by $n$ elements in $\Bbbk[x_0, \ldots,x_n]$ and that there is an element 
$[f] \in R_i$ such that 
$[f]R_d = R_{d+i}$. Then $R$ is of projective dimension zero.
\end{lemma}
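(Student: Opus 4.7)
The plan is to combine Lemma \ref{lemma:art} with a dimension count coming from Krull's height theorem. Lemma \ref{lemma:art} already reduces the problem to two possibilities: $R$ is artinian, or $R$ is of projective dimension zero. So it suffices to rule out the artinian case.

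First I would observe that the hypothesis says $I$ is generated by $n$ elements inside the polynomial ring $S = \Bbbk[x_0,\ldots,x_n]$ in $n+1$ variables. By Krull's principal ideal theorem (Hauptidealsatz), the height of any ideal generated by $n$ elements in a Noetherian ring is at most $n$. Since $S$ is a Cohen--Macaulay (in fact regular) ring of Krull dimension $n+1$, this gives
\[
\dimension(R) \;=\; \dimension(S) - \operatorname{height}(I) \;\geq\; (n+1) - n \;=\; 1.
\]
Hence $R$ has positive Krull dimension and cannot be artinian.

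Combining this with Lemma \ref{lemma:art}, the only remaining alternative is that $R$ is of projective dimension zero, which is what we wanted to show.

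The main obstacle, if any, is simply being clear about what tools are available: the artinian/projective dichotomy comes for free from the previous lemma, and the only real input is the general fact that $n$ generators cannot cut the Krull dimension down below $1$ in a polynomial ring on $n+1$ variables. No additional calculation is needed.
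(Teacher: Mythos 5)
Your proposal is correct, but it takes a different route than the paper's own proof. The paper does not appeal to Lemma~\ref{lemma:art}: it observes that since $S/(I+(f))$ is artinian, the $n+1$ forms $f_1, \dots, f_n, f$ are a homogeneous system of parameters, hence a regular sequence, in the Cohen--Macaulay ring $S$; the initial segment $f_1, \ldots, f_n$ is then also a regular sequence, which pins down $\dimension(S/I)$ to exactly $(n+1)-n=1$ in one step. You instead split the argument into an upper bound ($\dimension R \leq 1$, imported from Lemma~\ref{lemma:art}) and a lower bound ($\dimension R \geq 1$, from Krull's generalized principal ideal theorem together with the dimension formula in the regular ring $S$). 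The underlying commutative algebra is the same in both cases, but your version is more modular --- it reuses Lemma~\ref{lemma:art} instead of re-deriving the artinian observation --- and swaps the regular-sequence machinery for the more elementary Hauptidealsatz. A useful side effect of your argument is that the inequality $\operatorname{height}(I) \leq n < n+1$ also rules out the degenerate case $I = S$, which would otherwise make the conclusion false (the zero ring is artinian but not of projective dimension zero). Both proofs are valid; the paper's is slightly more compact, yours is easier to verify step by step.
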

\begin{proof}
The ring $S/(I+(f))$ is artinian, hence $(f_1, \ldots, f_n,f)$ forms a regular sequence. But also
$(f_1, \ldots, f_n)$ forms a regular sequence, so $S/I$ is of projective dimension zero.
\end{proof}

Even if we know that $R$ is of projective dimension zero, it is also hard to tell whether or not 
the maximal ideal is associated. The following example shows that although $\dim_{\Bbbk}(R_{d}) = \dim_{\Bbbk}(R_{d+1})$ and 
there is an element $l$ such that $[l]R_d = R_{d+1}$, it does not hold that $d \geq \nz(R)$.  

\begin{example} \label{example:localnonzerodivisor}
Let $I = (x^2-xz, xy-z^2, y^2-z^2)$. Then $\Hs(R,t) = 1 + 3t + 3t^2 + t^3 + t^4 + \cdots$ and $\nz(R) = 3$. We have  
$I = (x-y,x-z) \cap (z^2, y^2, xy, x^2-xz)$, $V(I) = V((x-y,x-z)) = (1:1:1)$ and
$\sqrt{(z^2, y^2, xy, x^2-xz)} = (x,y,z)$. We can choose $\{[x],[y],[z]\}$ och $\{[xz],[yz],[z^2]\}$ as $\Bbbk$-bases in degree one and two respectively and thus, 
the map from $R_1$ to $R_2$ induced by multiplication by $[z]$ is injective.
\end{example}

Fortunately, as the next theorem shows, if we are only interested in computing the variety, it is enough to find an $l$ such that $[l]R_d = R_{d+1}$. 

 \begin{theorem} \label{thm:gen}
 
 Let $I$ be any homogeneous ideal and let $R = S/I$. Suppose that there exists an element $[l]$ such that 
 $[l]R_d = R_{d+1}$. Let $[f_1], \ldots, [f_{t}]$ be a $\Bbbk$-basis for $R_{d+1}$. 
 Let $[e_1], \ldots, [e_t]$ be such that  $[e_i] [l] = [f_i]$. 
 Let $A_0, \ldots, A_{n}$ be such that $A_i$ corresponds to multiplication with $x_i$ with respect to the bases $[e_1], \ldots, [e_t]$ and $[f_1], \ldots, [f_t]$. 
 
 Suppose that $p \in V(I)$. Then $e(p)^t = (e_1(p), \ldots, e_t(p))^t$ is a common eigenvector to the $A_i$'s. Let $\lambda_i$ be the eigenvalue of $A_i$ corresponding to $e(p)^t$. 
 Then $p = (\lambda_0:\lambda_1: \cdots : \lambda_n)$.
  \end{theorem}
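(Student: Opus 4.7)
The plan is to do essentially the same computation as in the proof of Theorem \ref{thm:mainproj}, but carried out directly on $R$ without first passing to a dehomogenization, since the surjectivity hypothesis $[l]R_d = R_{d+1}$ is all that is actually needed to make the evaluation identities come out right. Note also that the theorem only asserts one direction --- from points of $V(I)$ to common eigenvectors --- so no converse argument has to be set up.

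First I would unwind the definitions. By construction of $A_i$ we have $[x_i][e_j] = \sum_k (A_i)_{jk}[f_k]$ in $R_{d+1}$, and since $[f_k] = [l][e_k]$ this gives, for any $p \in V(I)$,
\[
x_i(p)\, e_j(p) \;=\; l(p) \sum_k (A_i)_{jk}\, e_k(p).
\]
Read as a matrix identity on the column $e(p)^t = (e_1(p),\ldots,e_t(p))^t$, this is exactly $A_i\, e(p)^t = (x_i(p)/l(p))\, e(p)^t$ for each $i$, so $e(p)^t$ is a simultaneous eigenvector and the eigenvalues satisfy $(\lambda_0:\cdots:\lambda_n) = (x_0(p):\cdots:x_n(p)) = p$. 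Everything thus reduces to verifying that $l(p) \neq 0$ and that $e(p)^t$ is nonzero.

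The key step --- and I expect the only nontrivial one --- is showing $l(p) \neq 0$ for every $p \in V(I)$. Since $p$ is a projective point, some coordinate $p_i$ is nonzero, so $x_i^{d+1}(p) = p_i^{d+1} \neq 0$. The surjectivity hypothesis gives $[x_i^{d+1}] = [l][g]$ for some $g \in S_d$; evaluation at $p$ then forces $l(p)\,g(p) = p_i^{d+1} \neq 0$, whence $l(p) \neq 0$. The same idea shows that $e(p)^t$ is nonzero: writing $[x_i^{d+1}] = \sum_k c_k [f_k]$ in the basis of $R_{d+1}$ and using $f_k(p) = e_k(p)l(p)$, at least one $e_k(p)$ must be nonzero.

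A last subtlety to handle is that the classes $[e_i]$ are only determined modulo the kernel of multiplication by $[l]$ on $R_d$, so it is not immediate that $e_i(p)$ is well defined. However, two admissible choices differ by some $[h]$ with $hl \in I$; evaluating at $p$ yields $h(p)l(p) = 0$, and since we have just shown $l(p) \neq 0$ this forces $h(p) = 0$. Thus the evaluations $e_k(p)$, and hence the eigenvalue equation, are unambiguous, and the proof is complete modulo the direct matrix verification sketched above.
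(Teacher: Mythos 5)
Your proof is correct, and the core computation — unwinding the definition of $A_j$, evaluating at $p$, and reading off $A_j\,e(p)^t = (x_j(p)/l(p))\,e(p)^t$ — is identical to the paper's. The only divergence is in how the two non-degeneracy facts are justified. The paper shows $e(p)^t\neq 0$ and $l(p)\neq 0$ by noting that $p\in V(I+(e_1,\ldots,e_t))$ would contradict the artinianness of $S/(I+(e_1,\ldots,e_t))$ (and likewise with $l$ adjoined), whereas you argue by direct evaluation: some coordinate $p_i$ is nonzero, surjectivity lets you write $[x_i^{d+1}]=[l][g]=\sum_k c_k[f_k]$, and evaluating at $p$ forces both $l(p)\neq 0$ and some $e_k(p)\neq 0$. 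Your route is a bit more elementary in that it avoids invoking artinianness and makes the obstruction visible concretely. You also add a well-definedness observation the paper omits: the classes $[e_i]$ are only determined modulo $\ker([l]\colon R_d\to R_{d+1})$, and since $l(p)\neq 0$ the ambiguity $[h]$ (with $hl\in I$) satisfies $h(p)=0$, so the evaluations $e_k(p)$, hence the asserted eigenvector, are unambiguous. That is a genuine (if minor) improvement over the paper's presentation, which takes the $[e_i]$'s as fixed without comment.
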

 
 \begin{proof}
 
 By the definition of the matrix $A_j$ we have
 $$[x_j][e_k] = a_{k1}^{(j)} [l e_1]+ \cdots + a_{kt}^{(j)} [l  e_t].$$

Thus, we get
 $$x_j(p) e_k(p) = a_{k1}^{(j)} l(p) \cdot e_1 (p)+ \cdots + a_{kt}^{(j)}l(p) \cdot e_t(p),$$
 or put in matrix form
 \begin{equation}
 x_j(p)e(p)^t = A_j l(p) e(p)^t.
 \end{equation}
Now $e(p)^t$ can not be the zero vector, since otherwise we would have $p \in V(I + (e_1) + \cdots + (e_t))$, which is a contradiction since 
$S/(I + (e_1) + \cdots + (e_t))$ is artinian. With the same argument, $l(p)$ must be non-zero.

 
 Hence $\frac{e(p)^t}{l(p)}$  is an eigenvector of $A_j$ with the
 eigenvalue $\frac{x_j(p)}{l(p)}$. The theorem follows since 
 \begin{gather*}
 (\lambda_{0}: \lambda_{1} : \cdots : \lambda_{h}) = (x_0(p)/l(p): x_1(p)/l(p) : \cdots : x_n(p)/l(p)) \\=
 (x_0(p): x_1(p): \cdots : x_n(p)).
 \end{gather*}
 \end{proof}

\begin{example}
Let $I = (y^2,z^2,xz,xy)$. Then $\dim_{\Bbbk}(R_1) = 3$ and $\dim_{\Bbbk}(R_2) = 2.$
We can choose $[x],[y],[z]$ and $[x^2], [yz]$ as $\Bbbk$-bases in degrees one and two respectively. It is clear that
$[x+z]R_2 = R_3$ and thus, by Lemma \ref{lemma:art}, we know that $I$ is of at most projective dimension zero. We have 
$[x+z] [x]= [x^2]$ and $[x+z] [y] = [yz]$, so with respect to the bases $[x], [y]$ and $[x+z] [x],  [x+z] [y]$, we get
$$
A_x =
\begin{pmatrix}
1 & 0 \\
0 & 0
\end{pmatrix},
A_y =
\begin{pmatrix}
0 & 0 \\
0 & 0 
\end{pmatrix} \text{ and }
A_z =
\begin{pmatrix}
0 & 0 \\
0 & 1 
\end{pmatrix}.
$$
There are two common eigenvectors for these matrices --- $(1,0)$ and $(0,1)$. The associated eigenvalues are
$1,0,0$ and $0,0,1$ respectively. By Theorem \ref{thm:gen}, we know that $V(I) \subseteq \{(1:0:0), (0:0:1)\}$. 
We have $y^2((1:0:0)) = z^2((1:0:0)) = xz((1:0:0)) = xy((1:0:0)) = 0$, but $z^2((0:0:1))  \neq 0$, so $V(I) = \{ (1:0:0)\}$. Thus, the second point 
was "false". 

\end{example}

Theorem \ref{thm:gen} could also be used to compute the variety in Example \ref{example:localnonzerodivisor}. We leave this computation as an exercise to the reader. 

So suppose that we want to compute the variety of an ideal $I$ which we suspect is of projective dimension zero. We propose the following procedure for a field 
$\Bbbk$ with enough elements.
\begin{itemize}
\item[\textbf{K1}]Compute the Gr\"obner basis elements of degree $1, 2$ and so on until we reach a degree $d$ such that 
$|\ini(I)_d^c| \geq \ini(I)_{d+1}^c|$ (this is the same as $\dim_{\Bbbk}(R_d) \geq \dim_{\Bbbk}(R_{d+1})=t$).
\item[\textbf{K2}] Choose a linear form $[l]$ at random and check if $R_d [l] = R_{d+1}$. If it was not, choose another $l$. If we did not find such an 
element even after many tries, go back to stage K1 and compute more Gr\"obner basis elements. 
\item[\textbf{K3}] Choose a basis $\{[f_1], \ldots, [f_t]\} = [\ini(I)_{d+1}^c]$ for $R_{d+1}$ and let $e_1, \ldots, e_t$ be such that $[e_i] [l] = f_i$.

\item[\textbf{K4}] Compute the multiplication matrices with respect to  $\{[e_t], \ldots, [e_t]\}$ and $\{[f_1], \ldots, [f_t]\}$.

\item[\textbf{K5}] Determine a set of common eigenvectors for these matrices, either by using symbolic or numerical methods and use Theorem \ref{thm:gen} to determine the variety of $I$.
\end{itemize}

We refer the reader to the book \cite{Stetterbook} and the citations therein for techniques to compute common eigenvectors using numerical methods.

\subsection{Computing $(\tilde{R},A,l)$ from the points} \label{sec:pointstovspace}

Given a set of projective points $P = \{p_1, \ldots, p_m\}$ one can form the vanishing ideal $I(P)$, which consists of all polynomials vanishing on all of the points in $P$. 
The Hilbert series of $R/I(P)$ is well studied but not completely understood, cf. \cite{geramita}. 
The most common way of computing Hilbert series of an ideal defined by projective points has been studied by means of the projective Buchberger-M\"oller algorithm 
\cite{Abbottetal, MMM}. This algorithm  computes a Gr\"obner basis of a vanishing ideal by 
computing a $\Bbbk$-basis for the $\Bbbk$-spaces $R_0, R_1, \ldots, R_d$ until degree $\max(m,\nz(R))$ and reducing potential 
Gr\"obner basis elements with respect to this basis.  We will present a reduced version of the projective Buchberger-M\"oller algorithm which instead of computing
the Gr\"obner basis of $I(P)$ computes the triplet $(\tilde{R},A,l)$ and 
we will show that the behavior of our method is asymptotically better than the classical Buchberger-M\"oller-algorithm.

Recall that we suppose that the representation of each projective point is fixed so that we can define evaluation of projective points in a unique way.

A nice way to compute normal forms with respect to vanishing ideals of projective points is by evaluation: Given a form $f$ of degree $d$ and a vector
space basis $e_1,\ldots, e_m$ of $R_d$, we obtain the normal form $\alpha_1 e_1 + \cdots + \alpha_m e_m$, where the $\alpha_i$'s are chosen to satisfy
$f(p_i) = \alpha_1 e_1(p_i) + \cdots + \alpha_m e_m(p_i)$ for $i = 1, \ldots, m$. The normal form does not depend on the choice of representation of the points.
Computing normal forms by means of evaluation is the key engine behind the graded Buchberger-M\"oller algorithm and the variation of the method given below.

When studying ideals of projective points, one can always assume that $n+1 \leq m$. Indeed, we have the following lemma, which is a graded version of Lemma 5.2 in \cite{lundqvist2}.

\begin{lemma} \label{lemma:gradedprojver}

Let $E = \{ x_{i_0}, \ldots, x_{i_{\overline{n}}} \}$ be  any subset of the variables such that
 $E(P)$ and $\{x_0, \ldots, x_n\}(P)$ has same rank. Let  
$\pi$ be the natural projection from $P^n(\Bbbk)$ to $P^{\overline{n}}(\Bbbk)$ defined by 
$\pi((a_0: \cdots: a_n)) = (a_{i_0}: \cdots:a_{i_{\overline{n}}})$. Then  $q_1, \ldots, q_m$ are distinct where  $q_i=\pi(p_{i})$.  Moreover, with $Q = \{q_1, \ldots, q_m\}$ and with 
$\overline{R} = \Bbbk[x_{i_0}, \ldots, x_{i_{\overline{n}}}]/I(Q)$,  the graded algebras 
 $\overline{R}$ and $R$ are isomorphic. 
\end{lemma}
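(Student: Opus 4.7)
The plan is to establish two things: first, that the projected points $q_1, \ldots, q_m$ are pairwise distinct as projective points, and second, that the natural inclusion of polynomial rings induces the desired graded algebra isomorphism. Both parts hinge on reinterpreting the rank hypothesis: the rank of $\{x_0, \ldots, x_n\}(P)$ is the dimension of the column span of this $(n+1) \times m$ matrix, and $E(P)$ is obtained by applying the coordinate projection $\pi: \Bbbk^{n+1} \to \Bbbk^{\overline{n}+1}$ column-wise. Since $\pi$ restricted to the column span of $\{x_0, \ldots, x_n\}(P)$ is automatically surjective onto the column span of $E(P)$, the assumption that both matrices have the same rank is equivalent to saying that $\pi$ is injective on the column span of $\{x_0, \ldots, x_n\}(P)$.

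For distinctness, I would argue by contradiction. If $q_i = q_j$ as projective points for some $i \neq j$, then the corresponding columns $c_i, c_j$ of $\{x_0, \ldots, x_n\}(P)$ satisfy $\pi(c_i) = \lambda \pi(c_j)$ for some $\lambda \in \Bbbk \setminus \{0\}$, so that $c_i - \lambda c_j$ lies in the column span and maps to $0$ under $\pi$. Injectivity forces $c_i = \lambda c_j$, whence $p_i = p_j$ projectively, contradicting the assumed distinctness of the original points.

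Next I would construct the isomorphism. The natural inclusion $\Bbbk[x_{i_0}, \ldots, x_{i_{\overline{n}}}] \hookrightarrow S$ composed with the quotient $S \to R$ yields a graded algebra homomorphism $\phi$. For any $g \in I(Q)$, evaluating $g$ at $p_k$ only involves the $E$-coordinates of $p_k$, which are by definition the coordinates of $q_k$, so $g(p_k) = g(q_k) = 0$; hence $\phi$ descends to a graded map $\overline{\phi}: \overline{R} \to R$. For surjectivity, the rank condition yields, for each $j$, a relation $x_j(P) = \sum_k c_{jk}\, x_{i_k}(P)$ among rows, so $x_j - \sum_k c_{jk} x_{i_k} \in I(P)$, and therefore $[x_j]$ lies in the subalgebra of $R$ generated by $[x_{i_0}], \ldots, [x_{i_{\overline{n}}}]$. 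For injectivity, if $f \in \Bbbk[x_{i_0}, \ldots, x_{i_{\overline{n}}}]$ maps to zero, then $f \in I(P)$, and by the same evaluation argument $f(q_k) = f(p_k) = 0$ for all $k$, so $f \in I(Q)$.

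The main obstacle is the distinctness step, where one must take care that a projective equality between $q_i$ and $q_j$, which is only a scalar relation between columns of $E(P)$, actually lifts to a scalar relation between the full columns of $\{x_0, \ldots, x_n\}(P)$; this is precisely where the reformulation of the rank hypothesis as injectivity of $\pi$ on the column span is indispensable. Once that reformulation is in hand, the remaining algebraic parts are a direct verification.
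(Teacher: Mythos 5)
Your proof is correct, and the isomorphism half matches the paper's own (terse) argument: the rank hypothesis read on rows gives relations $x_j - \sum_k c_{jk}\,x_{i_k} \in I(P)$ establishing surjectivity, while the observation that a polynomial in the $E$-variables evaluates at $p_k$ exactly as it does at $q_k$ yields both well-definedness and injectivity of $\overline{\phi}$. Where you genuinely diverge is the distinctness of the $q_i$: the paper simply declares it ``clear,'' and the intended reading is that it is a consequence of the isomorphism just established (if some $q_i$ coincided, $\overline{R}$ and $R$ would have different Hilbert function values $|Q| < m$ in large degree, contradicting the graded isomorphism). You instead prove distinctness directly by rereading the rank equality on columns: $\pi$ always maps the column span of $\{x_0,\ldots,x_n\}(P)$ onto that of $E(P)$, so equal rank is equivalent to injectivity of $\pi$ on the column span, and a projective coincidence $q_i = q_j$ would place the nonzero column vector $c_i - \lambda c_j$ in the kernel. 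Both routes are sound; yours has the advantage of being independent of the isomorphism (so the two halves of the lemma are logically decoupled), and it supplies the explicit linear-algebra content that the paper's ``clear'' leaves to the reader. One small point worth making explicit in your writeup is that the paper fixes affine representatives for projective points, which is what lets you treat the columns $c_i$ as well-defined vectors and meaningfully write $q_i^a = \lambda q_j^a$.
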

\begin{proof}
Suppose that $x_i \notin E$. Then $x_i(P) = \alpha_1 x_{i_1}(P) + \cdots + \alpha_{\overline{n}} x_{i_{\overline{n}}}(P)$. Hence
$x_i -  \alpha_1 x_{i_1} + \cdots + \alpha_{\overline{n}} x_{i_{\overline{n}}} \in I$. Since $R_{\geq 1}$ is generated in degree one, it is clear that the elements in $E$ generates $R_{\geq 1}$. 
Since the evaluation on $q_i$'s and the $p_i$'s agrees on the elements in $\Bbbk[x_{i_0}, \ldots, x_{i_{\overline{n}}}]$, it follows that 
$\overline{R}$ and $R$ are isomorphic as graded algebras. It is the clear that
$q_1, \ldots, q_m$ are distinct.
\end{proof}

\begin{remark}
In a more subtle way, Lemma \ref{lemma:gradedprojver} actually follows directly from the projective Buchberger-M\"oller algorithm.
\end{remark}

We now give a variant of the projective Buchberger-M\"oller algorithm for building the triplet $(\tilde{R},A,l)$ from the points. As for the Buchberger-M\"oller algorithm, this algorithm is based on the evaluation method to compute normal forms. But it differs from the Buchberger-M\"oller algorithm in the sense that it is focused on giving the multiplication tables with respect to the variables rather than giving a Gr\"obner basis for the ideal.

\vspace{0.5cm}

\noindent

\begin{itemize}


\item[\textbf{L1}]
Compute a non-zero divisor $l$ of degree one by using the method in Proposition \ref{prop:nonzerodivisordegone}.

\item[\textbf{L2}] 
Initiate the lists $B_0 = L_0 = [1]$ and $\initials = []$. Let $d =0$.

\item[\textbf{L3}] 
If $\rank (B_d(P)) = |P|$, then $\nz(R) = d$. Return $B_0, \ldots, B_d$ and $l$.  
Otherwise, increase $d$ by one, let $B_d = [ ]$ and let $L_d$ be the list of all monomials of degree $d$ which are not multiples of an element of $\initials$.

\item[\textbf{L4}] 
If $L_d$ is empty, go to step L3; otherwise choose the monomial 
$t = min_{\prec}(L_d)$ with respect to a fixed monomial order and remove it from $L_d$.

\item[\textbf{L5}] If $t(P)$ can be written as a linear combination of the rows in $B_d(P)$, then
add $t$ to the set Initials and continue with step L4. Else, append $t$ to $B_d$ and continue with step L4. 
\end{itemize}
The correctness of the method is a direct consequence of the projective BM-algorithm, since the sets $B_0, \ldots, B_d$ are computed in the same way using the two methods. 
Thus $\initials$ will generate $\ini(I)_{\prec}$, while $B$ will be the complement of $\ini(I)_{\prec}$. By using another selection method in step L4, it is possible to obtain a basis which is not necessarily the complement of an initial ideal (It is an easy exercise to check that the termination of the algorithm does not depend on the selection method). 

We implicitly assume that we have used Lemma \ref{lemma:gradedprojver} so that $n \leq m$. This preprocessing can be done using $O(nm^2)$ arithmetic operations, since we test for linear
dependence $n$ times.
It is straightforward to lift the result in \cite{lundqvist1} and show that the complexity of the algorithm is dominated by the arithmetic operations and not the monomial manipulations. 
The number of arithmetic operations for the step L1 is bounded by $O(m^3)$ by an elementary analysis of the method in Proposition \ref{prop:nonzerodivisordegone}. For each degree $d$ during the algorithm, we need to check linear dependence at most $\min(m,n) m$ times. Thus, the arithmetic complexity of the method is a cost of at most $O(\post(R) \min(m,n) m^3)$. The original analysis \cite{MMM} of the Buchberger-M\"oller algorithm reports the complexity $O(nm^4)$. (Although it is possible to show that the performance is $O(\min(m,n)nm^3)$.)


%

\begin{example} \label{ex:multalg}
In Example 3.5 in \cite{Abbottetal}, the point set 
$P =\{(0:2:5), (0:1:2)$, $(1:3:1),$ $(4:3:4)$, $(2:5:4),
(1:4:4)\}$ is considered. The Gröbner basis with respect to DegRevLex and 
$x \prec y \prec z$ is generated in degrees three and four and $\Hs(R,t)= 1+3t+6t^2+6t^3+\cdots$. 

With our approach, we would first fix the coordinates $P =\{(0,2,5), (0,1,2)$, $(1,3,1),$ $(4,3,4)$, $(2,5,4),
(1,4,4)\}$ and then compute $L_0= \{1\}, B_0 = \{1\}$ and $L_1 = \{x,y,z\}$. Since $\{x,y,z\}(P)$ has full rang, we will have $B_1 = \{x,y,z\}$. Thus, Initials is empty and 
$L_2 = \{ z^2, yz, xz, y^2, xy, x^2 \}$. It turns out that also $\{z^2, yz, xz, y^2, xy, x^2 \}(P)$ has full rang, so
$B_2 =\{ z^2, yz, xz, y^2, xy, x^2\}$. Since $|P| = 6$, the algorithm stops and we know that $\Hs(R,t) = 1 + 3t + 6t^2 + 6t^3 + \cdots$. 
It is immediate that 
$y(p_i) \neq 0$ for $i = 1, \ldots, 6$, so we can use $B_3 =\{yz^2, y^2z, xyz, y^3, xy^2, x^2y \}$ as a $\Bbbk$-basis in degree three, and in general
$$B_d =\{y^{d-2}z^2, y^{d-1}z, xy^{d-2}z, y^{d}, xy^{d-1}, x^2y^{d-2} \}.$$ 

Say that we want to compute the normal form of $x^6 + z^6$.
If we do it by evaluation, we solve the linear equations
$$(x^6+ z^6)(p_i) = (\alpha_1 y^4z^2 +  \alpha_2 y^{5}z +  \alpha_3 xy^{4} + \alpha_4 y^{6} + \alpha_5 xy^{5}+ \alpha_6 x^2y^{4})(p_i)$$ for $i = 1, \ldots, 6$ which is equivalent to 
perform Gaussian elimination on a $(6 \times 6)$-matrix. As result, we get 
$$\nf(x^6 + z^6, B) = \frac{2083926583 }{23522400}y^6-\frac{11603225231}{470448000} x
    y^5-\frac{8111541583 
    }{26136000}y^5z$$ $$-\frac{327280970021}{940896000} x^2
    y^4 +\frac{17527852333 
   }{117612000} y^4z^2+\frac{127511218609 }{313632000}x y^4z.$$

We could also use the multiplication matrices. Notice that $A_x, A_y$ and $A_z$ share the six
linear independent eigenvectors $$(z^2(p_i),  yz(p_i), xz(p_i), y^2(p_i), xy(p_i), x^2(p_i))^t.$$ Thus, if we let
$T = B_2(P)^t$, we have $A_x = T D_x T^{-1}$, $A_y = T D_y T^{-1}$ and $A_z = T D_z T^{-1}$, where 
$D_x = \diag(0,0,1/3,4/3,2/5,1/4)$, $D_y = \diag(1,1,1,1,1,1)$ and $D_z = \diag(5/2,2,1/3,4/3,4/5,1)$.
  
So to compute the normal form of $x^6 + z^6$, we can start by computing
$\nf(x^2,B) = x^2 = (0,0,0,0,0,1) \cdot  (z^2, yz, xz, y^2, xy, x^2) $ and $\nf(z^2,B) = z_2 = (1,0,0,0,0,0) \cdot (z^2, yz, xz, y^2, xy, x^2)$. We get
$\nf(x^6+z^6,B) = \nf(x^6,B) + \nf(z^6,B)$, where 
$$\nf(x^6,B) = (0,0,0,0,0,1) A_x^4 (y^4z^2, y^5z, xy^4z, y^6, xy^5, x^2y^4)^t$$ 
$$= (0,0,0,0,0,1) TD_x^4T^{-1} (y^4z^2, y^5z, xy^4z, y^6, xy^5, x^2y^4)^t$$ 
and
$$\nf(z^6,B) = (1,0,0,0,0,0) A_z^4 (y^4z^2, y^5z, xy^4z, y^6, xy^5, x^2y^4)^t$$ 
$$= (1,0,0,0,0,0) TD_z^4T^{-1} (y^4z^2, y^5z, xy^4z, y^6, xy^5, x^2y^4)^t.$$ 
Of course, we get the same result as above.




%

\end{example}

\subsubsection{Computing separators} \label{sec:computingseparators}

A family of separators with respect to a set of affine points $P = \{p_1, \ldots, p_m\}$, 
is a set $\{f_1,\ldots, f_m\}$ of polynomial functions such that $f_i(p_i) = 1$
and $f_i(p_j) = 0$ if $i \neq j$. It is easy to see that the
separators forms a $\Bbbk$-basis for $\Bbbk[x_1, \ldots, x_n]/I(P)$. 

When the points are projective, we say that  $\{f_1,\ldots, f_m\}$ is a set of separators 
if $f_i(p_i) \neq 0$ and $f_i(p_j) = 0$ when $i \neq j$. When all separators are 
of the same degree $d$, they constitute a $\Bbbk$-basis for $R_d$. If $\Bbbk$ contains at least $m$ elements so that there exists a non-zero divisor $l$, 
we can construct a separator-$\Bbbk$-basis for $R_{d+i}$ as $f_1l^i, \ldots, f_m l^i$. 

In \cite{lundqvist2}, two methods for computing separators with respect to a collection of affine points are discussed. It is possible to lift this method to the
projective setting.
Both methods perform the same number of arithmetic operations.
We will illustrate one of the methods by an example. 
In $\mathbb{Z}^5$, consider
  $p_1 = (1,2,0,1,1),$
  $p_2 = (1,0,1,1,2),$
  $p_3 = (1,2,0,3,3),$
  $p_4 = (0,0,2,0,4),$ 
  $p_5 = (0,0,2,1,5)$ and
  $p_6 = (2,1,3,1,6)$. We associate the following table to this point set  \begin{displaymath}
\left( \begin{array}{cccccc|lc}
  1 & 1 & 1 & 0 & 0 & 2 & \Sigma_1 = & \{\{1,2,3 \}, \{4,5 \}, \{6 \} \}  \\
      2 & 0 & 2 & 0 & 0 & 1 & \Sigma_2 = & \{\{1,3\}, \{2\}, \{4,5\}, \{6\} \} \\
      0 & 1 & 0 & 2 & 2 & 3 & \Sigma_3 = &\{\{1,3\}, \{2\}, \{4,5\}, \{6\} \} \\
      1 & 1 & 3 & 0 & 1 & 1 & \Sigma_4 = &\{\{1 \}, \{3 \}, \{2\}, \{4\}, \{5\}, \{6\} \}\\
      1 & 2 & 3 & 4 & 5 & 6 & \Sigma_5 = &\{\{1 \}, \{3 \}, \{2\}, \{4\}, \{5\}, \{6\} \}
\end{array} \right)
\end{displaymath}
The sets on the right hand side are also described by an example: The set $\{1,3\}$ on the second row shows that $p_1$ and $p_3$ agree on the first two coordinates. When computing such a table from a point set, one obtains a matrix 
$c_{ij}$, where $c_{ij}$ is the first position where $p_i$ and $p_j$ differ. 

This matrix is used to compute the separators and it is clear that 
$$Q_i = \prod_{i \neq j} \frac{x_{c_{ij}}-p_{jc_{ij}}}{p_{ic_{ij}}-p_{jc_{ij}}}$$
satisfies $Q_i(p_j)= 0$ if $i \neq j$ and $Q_i(p_i) = 1$. 

It is showed in \cite{lundqvist2} that at most $nm + m^2$ arithmetic comparisons are used to compute the matrix $c_{ij}$. (In fact a slightly improved upper bound is given.)




We will now show how to make use of the matrix $c_{ij}$ to compute projective separators.
If we let $S_{ij}(p_i) \neq 0$ and $S_{ij}(p_j) = 0$, then
$Q_1, \ldots, Q_m$ is a set of projective separators for $p_1, \ldots, p_m$, 
where
\begin{equation} \label{expr:sep} 
Q_i = \prod_{j\ne i}  S_{ij}.
\end{equation}

Suppose that 
each point $p_i$ is normalized in the sense that the first non-zero position equals one. It is then clear that we can use the affine method to compute the matrix $(c_{ij})$ with respect to the points.

We will now give an explicit formula for each $S_{ij}$. 
To simplify notation, let $h = c_{ij}$.

\begin{itemize}
\item If  $p_{ih} = 0$, then $p_{jh} \neq 0$. 
Let $h'$ be the least position such that
 $p_{i h'} = 1$ and let 
$S_{ij} = p_{j h} x_{h'} - p_{j h'}x_h$. 
\item 
Else, if $p_{ih} \neq 0$ but $p_{jh} = 0$, then let
$S_{ij} = x_h$. 
\item
Finally, suppose that
$p_{ih} \neq 0$ and $p_{jh} \neq 0$. Since $p_i$ and $p_j$ agrees on all coordinates less than $h$ and $p_{ih} \neq p_{jh}$, there is a 
$h' \leq h$ such that $p_{i h'} = p_{j h'} = 1$. Thus, let
$S_{ij} = p_{jh} x_{h'} - p_{j h'} x_h = p_{jh} x_{h'} - x_h$.
\end{itemize}
Notice that we can choose the index $h'$ occurring in the two situations as the first entry where $p_i$ equals one. It is clear that we can determine the first non-zero index of each point using at most $nm$ arithmetic comparisons. We have proved the following theorem.

\begin{theorem}  \label{thm:separators}
Let $P = \{p_1, \ldots, p_m\}$ be a set of distinct projective points. Then we can compute a set of separators of degree $m-1$ with respect to $P$ using at most
$nm + m^2$ arithmetic operations. 
\end{theorem}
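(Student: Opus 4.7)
The plan is to implement the product formula (\ref{expr:sep}): define $Q_i = \prod_{j\neq i} S_{ij}$, where each $S_{ij}$ is a linear form with $S_{ij}(p_i)\neq 0$ and $S_{ij}(p_j)=0$. Once these linear forms are in hand the verification is immediate. The degree of $Q_i$ is $m-1$, since it is a product of $m-1$ linear forms. For any $k\neq i$, the factor $S_{ik}$ vanishes at $p_k$, so $Q_i(p_k)=0$; while $Q_i(p_i)=\prod_{j\neq i} S_{ij}(p_i)\neq 0$ because every factor is non-zero at $p_i$. Thus the proof reduces to producing the $S_{ij}$'s and bounding the cost.

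First I would normalize each representative so that its first non-zero coordinate equals $1$, and record the index $h'_i$ of that coordinate; this preprocessing, together with the scanning procedure from \cite{lundqvist2}, produces the matrix $(c_{ij})$ of first-differing positions at a total cost of at most $nm+m^2$ arithmetic comparisons, with the indices $h'_i$ coming as a byproduct of the same scan. With $c_{ij}$ and $h'_i$ on hand, each $S_{ij}$ can be assembled directly from the three cases stated before the theorem by just reading off coordinates, so no further arithmetic is required, and the overall cost fits within the claimed $nm+m^2$ bound.

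The main technical content is to check that the three-case recipe for $S_{ij}$ really delivers the vanishing/non-vanishing properties. Cases~1 and~2 are immediate substitutions: in Case~1, $S_{ij}(p_i)=p_{jh}\cdot 1 - p_{jh'}\cdot 0 = p_{jh}\neq 0$ and $S_{ij}(p_j)=p_{jh}p_{jh'}-p_{jh'}p_{jh}=0$; in Case~2, $S_{ij}=x_h$ evidently works. The tricky case is Case~3, where $p_{ih},p_{jh}$ are both non-zero. Here the key observation is that the chosen $h'=h'_i$ must satisfy $h'<h$: indeed, if $h'=h$ then since $p_i$ and $p_j$ agree in every position strictly less than $h$, the index $h$ is also the first non-zero position of $p_j$, so normalization forces $p_{jh}=1=p_{ih}$, contradicting $h=c_{ij}$. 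Hence $h'<h$ and, by the agreement of $p_i$ and $p_j$ before position $h$, $p_{ih'}=p_{jh'}=1$. The verification $S_{ij}(p_j)=p_{jh}\cdot 1 - 1\cdot p_{jh}=0$ and $S_{ij}(p_i)=p_{jh}-p_{ih}\neq 0$ (which is non-zero precisely because $h=c_{ij}$) then goes through, completing the argument.
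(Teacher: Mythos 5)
Your proposal follows the same route as the paper: normalize the representatives, compute the first-difference matrix $(c_{ij})$ by the method from \cite{lundqvist2}, build the linear factors $S_{ij}$ from the three-case recipe, and assemble $Q_i = \prod_{j \neq i} S_{ij}$. The one place where you go beyond the paper's text is in Case 3, where you supply the observation that $h'<h$ (the paper only asserts $h'\leq h$); this is worth having explicitly, since if $h'=h$ the formula would degenerate to $(p_{jh}-1)x_h$, which can vanish at $p_i$, so the argument that $h'=h$ is impossible is genuinely needed for the verification to go through. You also make a sensible remark that the indices $h'_i$ can be extracted during the same scan that produces $(c_{ij})$, which is one way to keep the total within the stated $nm+m^2$ bound rather than $2nm+m^2$. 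In substance, though, this is the paper's proof, carried out a bit more carefully.
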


\begin{example}

Let $p_1 = (1:2:0:1:1:0:3:5),$
  $p_2 = (1:0:1:1:2:0:3:5),$
  $p_3 = (1:2:0:3:3:1:2:0)$ and let
  $p_4 = (0:1:1:0:2:0:1:0).$
We will show how to compute $Q_1$. We have $c_{12} = 2$ and 
$p_{12} = 2$ and $p_{22} = 0$. Thus, $S_{12} = x_2$. 
We have $c_{13} = 4$ and $p_{14} = 1$ and $p_{34} = 3$. Since
$p_{11} = p_{31} = 1$, we let 
$S_{13} = p_{34} x_1 - x_4 = 3x_1-x_4.$
We have $c_{14} = 1$ and $p_{41} = 0$, so $S_{14} = x_1$. Hence
$Q_1 = x_2(3x_1-x_4)x_1.$

\end{example}

\section{Discussion}

In a forthcoming paper we will generalize parts of the results to rings of arbitrary projective dimension.

%

\end{document}